\crefname{hypothesis}{Hypothesis}{Hypotheses}
\Crefname{ALC@unique}{Line}{Lines}
\colorlet{texcscolor}{blue!50!black}
\colorlet{texemcolor}{red!70!black}
\colorlet{texpreamble}{red!70!black}
\colorlet{codebackground}{black!25!white!25}
\lstdefinestyle{siamlatex}{%
  style=tcblatex,
  texcsstyle=*\color{texcscolor},
  texcsstyle=[2]\color{texemcolor},
  keywordstyle=[2]\color{texemcolor},
  moretexcs={cref,Cref,maketitle,mathcal,text,headers,email,url},
}
\DeclareTotalTCBox{\code}{ v O{} }
{ 
  fontupper=\ttfamily\color{black},
  nobeforeafter,
  tcbox raise base,
  colback=codebackground,colframe=white,
  top=0pt,bottom=0pt,left=0mm,right=0mm,
  leftrule=0pt,rightrule=0pt,toprule=0mm,bottomrule=0mm,
  boxsep=0.5mm,
  #2}{#1}
\patchcmd\newpage{\vfil}{}{}{}
\renewcommand{\vec}[1]{\mathbf{#1}}
\title{Solving Graph Laplacians via Multilevel Sparsifiers\thanks{Submitted to the editors for the 2022 Copper Mountain Special Issue
of SIAM Journal on Scientific Computing.}}
\author{Xiaozhe Hu\thanks{Department of Mathematics, Tufts University, Medford, MA (\email{xiaozhe.hu@tufts.edu}).}
\and Junyuan Lin\thanks{Department of Mathematics, Loyola Marymount University, Los Angeles, CA (\email{junyuan.lin@lmu.edu}).}}
\begin{document}
\maketitle

\begin{tcbverbatimwrite}{tmp_\jobname_abstract.tex}
\begin{abstract}
We consider effective preconditioners for solving Laplacians of general weighted graphs. Theoretically, spectral sparsifiers (SSs) provide preconditioners of optimal computational complexity. However, they are not easy to use for real-world applications due to the implementation complications. Multigrid (MG) methods, on the contrary, are computationally efficient but lack of theoretical justifications. To bridge the gap between theory and practice, we adopt ideas of MG and SS methods and proposed preconditioners that can be used in practice with theoretical guarantees. We expand the original graph based on a multilevel structure to obtain an equivalent expanded graph. Although the expanded graph has a low diameter, a favorable property for constructing SSs, it has negatively weighted edges, which is an unfavorable property for the SSs. We design an algorithm to properly eliminate the negatively weighted edges and prove that the resulting expanded graph with positively weighted edges is spectrally equivalent to the expanded graph, thus, the original graph. Due to the low-diameter property of the positively-weighted expanded graph preconditioner (PEGP), existing algorithms for finding SSs can be easily applied. To demonstrate the advantage of working with the PEGP, we propose a type of SS, multilevel sparsifier preconditioner (MSP), that can be constructed in an easy and deterministic manner. We provide some preliminary numerical experiments to verify our theoretical findings and illustrate the practical effectiveness of PEGP and MSP in real-world applications.
\end{abstract}

\begin{keywords}
graph Laplacian, preconditioned iterative methods, support theory, low-stretch spanning trees, graph sparsification
\end{keywords}

\begin{MSCcodes}
05C50, 05C85, 65F10, 68R10
\end{MSCcodes}
\end{tcbverbatimwrite}
\input{tmp_\jobname_abstract.tex}

\section{Introduction}
Graph Laplacians naturally arise in large-scale computations of various applications. For example, solving systems with weighted graph Laplacians is the core component for solving ranking and user recommendation problems~\cite{hirani2010least,jiang2011statistical,HodgeRank}. In~\cite{cao2014new,DSD:2013,DSD2018,cowen2021random}, similarities of proteins are calculated by solving graph Laplacian systems associated with the protein interaction networks. Furthermore, these similarities are used in clustering and labeling proteins. In addition, the marriage of graph Laplacian with topics such as Convolutional Neural Networks and tensor decomposition has also been a dominating trend~\cite{he2006tensor,Bronstein.M;Bruna.J;LeCun.Y;Szlam.A;Vandergheynst.P2017a,Kipf.T;Welling.M2016a,Henaff.M;Bruna.J;LeCun.Y2015a,Bruna.J;Zaremba.W;Szlam.A;LeCun.Y2013a,shaham2018spectralnet,li2018adaptive}. Advanced algorithms that adapt graph Laplacian properties to improve performance are key components in modern methods for image reconstruction, clustering image data-sets, and classification~\cite{agarwal2006higher,narita2012tensor,GLTD,steerableGL}. 
	
Two major approaches to solve a linear system of equations are direct and iterative (indirect) methods. Direct methods are variants of Gaussian Elimination which might become expensive when the graph size gets large. In this case, it is more suitable to consider iterative methods that provide successively better approximations to the solutions. One example is the Conjugate Gradient (CG) method. However, the convergence of the iterative usually depends on the condition number of the linear system. A graph Laplacian matrix, $\bm{L}$, is symmetric positive semidefinite (has $\operatorname{span}\{ \vec{1} \}$ as its nullspace), and its condition number $\kappa(\bm{L})$ is defined as the ratio between the largest eigenvalue of $\bm{L}$ and the smallest non-zero eigenvalue of $\bm{L}$. In practice, $\kappa(\bm{L})$ could be huge, especially for large-scale graphs, therefore it is difficult to solve the graph Laplacian systems $\bm{L}\vec{x} = \vec{b}$ using standard iterative methods, such as CG.  

	
To accelerate the iterative methods, one can use preconditioning. A good preconditioner for a matrix $\bm{L}$ is a matrix $\bm{R}$ that makes solving the following systems of linear equations in $\bm{RL}$ easy.
\begin{equation}\label{eqn:graphL_precsys}
	\bm{R}\bm{L}\vec{x}=\bm{R}\vec{b},
\end{equation}
where $\bm{R}\in\mathbb{R}^{n\times n}$ is noted as the preconditioner of $\bm{L}$. (Note that we use the notation $\bm{R}$ for preconditioner instead of the more frequently used $\bm{R}^{-1}$, since we do not assume $\bm{R}$ is invertible.)
	
More specifically, we want to choose an $\bm{R}$ that approximates the Moore-Penrose pseudoinverse, $\bm{L}^{\dagger}$, of $\bm{L}$. When $\bm{R} \approx \bm{L}^{\dagger}$, we have $\kappa(\bm{R}\bm{L})\approx\kappa(\bm{L}^{\dagger}\bm{L})\approx\kappa(\bm{I})\ll\kappa(\bm{L})$. It is clear that finding suitable $\bm{R}$'s that are also inexpensive to compute is the key challenge for designing efficient preconditioned iterative methods. Based on the support theory~\cite{Vaidya90}, Gremban et al. introduced the support-tree preconditioners in 1995~\cite{alon1995graph}, and this idea of using spanning trees to approximate the original graph became popular in the computer science field. For the last decades, researchers have developed preconditioners using spanning trees and sparsifiers~\cite{bern2001support, boman2001spanning, boman2003support, spielman2004nearly, elkin2008lower,abraham2008nearly,koutis2011nearly,abraham2012using,Kelner:2013}. The best-performing support-tree preconditioners out of these are sparsifiers built from low-stretch spanning trees that can achieve nearly $O(m\log n)$ time for strictly diagonally dominant linear systems. Despite the theoretical success of those preconditioners, their implementations are still behind because finding low-stretch spanning trees for general graphs is complicated. Those spectral equivalent sparsifiers are usually constructed by randomly adding edges to the low-stretch spanning tree. Therefore, without an efficient implementation of the low-stretch spanning tree algorithm, it remains difficult to use those preconditioners for practical applications. 
	
As another important family of preconditioners, the algebraic multigrid (AMG) algorithms are frequently applied to solve linear graph Laplacian systems in practice. The standard AMG method was proposed to solve partial differential equations (PDEs) and mainly involves two parts: smoothing the high-frequency errors on the fine levels and eliminating the low-frequency errors on the coarse grids~\cite{xu2017algebraic,Vassilevski.P2008,Ruge.J;Stuben.K1987,AMG_1984,ruge1984efficient,ruge1983algebraic}. AMG is proven to be one of the most successful iterative methods in practical applications and many AMG methods have been developed to solving graph Laplacian systems, such as combinatorial multigrid~\cite{KOUTIS}, Lean AMG~\cite{livne2012lean}, Algebraic multilevel preconditioners based on matchings/aggregations~\cite{Kim.H;Xu.J;Zikatanov.L2003,notay2010aggregation,Brannick.J;Chen.Y;Kraus.J;Zikatanov.L.2013a}, and AMG with adaptive aggregation schemes~\cite{d2013adaptive,hu2019adaptive}. While multilevel methods work robustly, are deterministic, and are widely used in practice, they lack theoretical guarantees for general graphs.
	
In this paper, we propose new preconditioners that combine the pros of support-tree and AMG preconditioners while avoiding their cons. It is noted by researcher that finding a low-stretch spanning tree on low diameter graphs is relatively easy~\cite{alon1995graph,elkin2008lower,koutis2014approaching}. Inspired on this point, we aim to construct a spectrally equivalent graph to any given general graph, which has lower diameter so that finding low-stretch trees/sparsifiers is feasible in practice. MG methods become our motivation for such construction. As described in~\cite{griebel1994multilevel}, MG method can be viewed as a standard iterative preconditioner for solving a linear system with multilevel expanded structure. We adopt this idea and construct a multilevel expanded graph from the original graph. More precisely, we take the following two steps to build a low diameter, positive-weighted expanded graph, which is easier to construct and feasible for theoretical analysis using existing support tree theory, compared to the original system:
		\begin{itemize}
			\item Step 1: We expand the original graph $G$ using AMG construction to the expanded graph $\widetilde{G}^{\mu}$. The multilevel structure of expanded graph $\widetilde{G}^{\mu}$ guarantees that it is of low diameter.
			\item Step 2: Since Step 1 introduces negative edges into the expanded graph, we extract the only positive subgraph $\widetilde{H}^{\mu}$ out of $\widetilde{G}^{\mu}$
	\end{itemize}
The resulting positive-weighted expanded graph preconditioner (PEGP) $\widetilde{H}^{\mu}$ has only positive weights, small diameter, and multilevel structure, which makes applying any spanning trees/sparsifiers finding algorithms easy. We define those spanning sparsifiers as multilevel sparsifier preconditioner (MSP) and include one deterministic method to extract a tree-structured, low-stretch sparsifier on $\widetilde{H}^{\mu}$ as an illustrative example.

The paper is organized as follows: section~\ref{sec:Pre} includes preliminaries and notations for graph and aggregation. Using these building blocks, we first extend the original graph $G$ into a multilevel expanded graph $\widetilde{G}^{\mu}$ with the original graph on its fine level in Section~\ref{sec:G_tilde}. Due to the multilevel structure of $\widetilde{G}^{\mu}$, the diameter of the original graph $G$ shrinks. In Section~\ref{sec:H_mu}, we further find the PEGP $\widetilde{H}^{\mu}$ of the expanded graph $\widetilde{G}^{\mu}$ that only contain positive edges. We prove that the graph Laplacian $\bm{L}_{\widetilde{H}^{\mu}}$ is spectrally equivalent to $\bm{L}_{\widetilde{G}^{\mu}}$. It is much easier to find spanning trees/subgraphs on $\widetilde{H}^{\mu}$ than on $G$, so we introduced a spanning tree/sparsifier MSP in Section~\ref{sec:MSP}. Numerical results are included in Section~\ref{sec:num} to test the robustness of the proposed preconditioners.
	
\section{Preliminaries}\label{sec:Pre}
This section includes notations and preliminaries of graph and aggregation that are useful for defining the expanded graph, the PEGP, and the MSP.
	
\subsection{Graph}\label{sec:graph}
Consider a simply connected graph $G = (E, V, \omega)$, where $E$ is the edge set, $V$ is the node set, and $\omega$ is the edge weight set. Here, $|V| = n$ and $|E| = m$ denote the number of nodes and edges in graph $G$, respectively. $E^+(G)$ and $E^-(G)$ denote positive and negative edge set in $G$, respectively. 

$\bm{L}_G$ represents the graph Laplacian that corresponds to $G$. Note that, $\bm{L}_G = \bm{B}^T\:\bm{W}\bm{B}$. Here, $\bm{B}^T\in \mathbb{R}^{n\times m}$ is the incidence matrix and $\bm{W}\in \mathbb{R}^{m\times m}$ is the diagonal weight matrix of graph $G$. When $\omega$ only contains positive elements, we are able to write $\bm{W}=\bm{W_{1/2}}^T\: \bm{W_{1/2}}$, where $\bm{W_{1/2}}\in \mathbb{R}^{m\times m}$ is the diagonal matrix with square-root of the edge weights.

Using graph $G$, in this work, we build the expanded graph $\widetilde{G}^{\mu} = (\tilde{E}, \tilde{V}, \tilde{\omega}(\mu))$ based on a aggregation-based multigrid structure similar to~\cite{gremban1996combinatorial}. $\tilde{n}$ is the number of nodes in the expanded graph $\widetilde{G}^{\mu}$ and $\mu$ is a weight parameter we set to control the condition number of the preconditioned graph Laplacian system. The detailed construction of the expanded graph is illustrated in Section~\ref{sec:G_tilde}. 

\subsection{Aggregation}\label{sec:agg}
We consider the original graph $G$ and carefully choose nodes in $G$ into different aggregations. The aggregates now become nodes in the next level just as the construction of AMG. We then continue to aggregate nodes until the desired level is achieved. 
	
One can observe that there can be infinitely many ways to aggregate nodes on each level, such as aggregations based on heavy edge coarsening~\cite{HEM,KARYPIS199896} and maximal independent set~\cite{young1974note}, and these methods can all be applied to define multilevel structure. As suggested in the appendix of~\cite{HodgeRank}, aggregating two nodes that are connected by a heavy edge is recommended because it minimizes the condition number of $\bm{L}_{\widetilde{G}^{\mu}}$ in a two-level setting. Follow this idea, we use the maximal weighted matching (MWM)~\cite{galil1986ev} in the numerical implementations in Section~\ref{sec:num}. MWM visits the vertices in the graph in random order and matches each unaggregated vertex to its neighbor with the maximal edge weight. The following Figure~\ref{fig:MWM} shows an example of MWM algorithm implemented on a graph with anisotropic diffusion.  
	\begin{figure}[h!]
		\begin{center}
			\includegraphics[width=10cm]{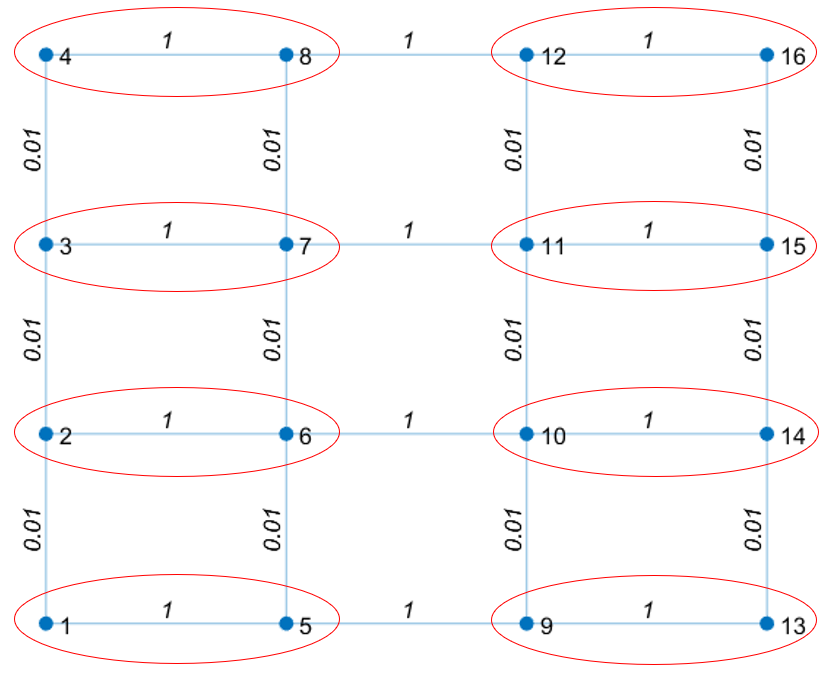}
		\end{center}
		\caption{The maximal weighted matching on an example graph}\label{fig:MWM}
	\end{figure} 

In Figure~\ref{fig:MWM}, the red circles represent nodes in the same aggregation. We can observe that all the aggregations contain two nodes linked by the heaviest incident edge. If there are leftover nodes after performing the matching scheme, we add each leftover node to the aggregation that has its heaviest neighbor.

Based on these aggregates, We are able to define the prolongation operator. Denote $n_{\ell}$ as the number of nodes on the $\ell^{th}$ level graph and the prolongation operator $\bm{P}^{\ell}_{\ell-1}\in \mathbb{R}^{n_{\ell-1}\times n_{\ell}}$ from level $\ell-1$ to level $\ell$ as follow,
	\begin{equation}
		\bm{P}^{\ell}_{\ell-1} (i, j) = \begin{cases} 1, & \text{if node $i$ is in aggregation $j$}, \\
			0, & \text{otherwise}.
		\end{cases}
	\end{equation}
	Furthermore, the prolongation operator $\bm{P}^{\ell}\in \mathbb{R}^{n\times n_{\ell}}$ from level $1$ to level $\ell$ is defined as follows,
	\begin{equation}\label{def:P}
		\bm{P}^{\ell}= \bm{P}^{2}_{1}\cdot \bm{P}^{3}_{2}\cdots\bm{P}^{\ell}_{\ell-1}.
	\end{equation}
	
	With all these components, we can define a composite prolongation operator which incorporates a weight parameter $\mu$ as follows,
	\begin{equation}
	    \{\bm{P}(\mu)\}^{\ell}: =
		\begin{bmatrix}
			 \bm{I} & (-\mu\bm{P^2}) & ((-\mu)^2\bm{P^3}) & \cdots & ((-\mu)^{\ell-1}\bm{P^{\ell}}),
		\end{bmatrix}
	\end{equation}
	where $\{\bm{P}(\mu)\}^{\ell}\in\mathbb{R}^{n\times \tilde{n}}$. This composite prolongation operator $\{\bm{P}(\mu)\}^{\ell}$ is an essential element in defining $\bm{L}_{\widetilde{G}^{\mu}}$ as shown in the next section.
	
	\section{Building the Positively Weighted Expanded Graph}
	In this section, we first explain in detail how to construct the multilevel expanded graph $\widetilde{G}^{\mu}$ so that its diameter is relative small but spectrally equivalent to the original graph. In particular, in Section~\ref{sec:G_tilde}, we expand the graph from $G$ to $\widetilde{G}^{\mu}$ and justify the legitimacy by showing the equivalence between the linear systems of graph Lapcians~$\bm{L}_G$ and $\bm{L}_{\widetilde{G}^{\mu}}$. Due to the existence of negative edges in $\widetilde{G}^{\mu}$, we further extract the PEGP $\widetilde{H}^{\mu}$ from $\widetilde{G}^{\mu}$. In Section~\ref{sec:H_mu}, we perform a spectral analysis between the two and show their spectral equivalence when parameter $\mu$ is chosen properly. Due the multilevel structure, the PEGP $\widetilde{H}^{\mu}$ has relatively small diameter which is favorable for finding a good spectral sparsifiers.  To demonstrate that, in Section~\ref{sec:MSP}, we propose a sparsifier, MSP, of $\widetilde{H}^{\mu}$ that has tree-like structure. 
	 
	\subsection{Expanding the Graph}\label{sec:G_tilde}
	Consider a simply connected graph $G$ and the linear system 
	\begin{equation}\label{eqn:originalsys}
		\bm{L}_G \vec{x}= \vec{b}.
	\end{equation}
	Since $\bm{L}_G$ is semi-definite and might be ill-conditioned, it is challenging to efficiently solve the linear system either by direct or simple iterative methods. Based on the support tree construction introduced by Gremban~\cite{gremban1996combinatorial} and the multilevel structure from the MG method~\cite{griebel1994multilevel}, we construct the expanded graph $\widetilde{G}^{\mu}$ such that solving its graph Laplacian~$\bm{L}_{\widetilde{G}^{\mu}}$,
	\begin{equation}\label{eqn:expandedsys}
		\bm{L}_{\widetilde{G}^{\mu}} \vec{\tilde{x}}= \vec{\tilde{b}}, 
	\end{equation}
	is mathematically equivalent to solving~\eqref{eqn:originalsys}.
	
	We build the graph Laplacian~$\bm{L}_{\widetilde{G}^{\mu}}$ of the expanded graph~$\widetilde{G}^{\mu}$ by the following formula: 
		\begin{align}
			\bm{L}_{\widetilde{G}^{\mu}} &= 
			\begin{bmatrix}
				\bm{I}\\
				(-\mu\bm{P^2})^T \\
				((-\mu)^2\bm{P^3})^T \\
				\vdots \\
				((-\mu)^{\ell-1}\bm{P^{\ell}})^T
			\end{bmatrix}\: \bm{L}_G\: 
			\begin{bmatrix}
				\bm{I} & (-\mu\bm{P^2}) & ((-\mu)^2\bm{P^3}) & \cdots & ((-\mu)^{\ell-1}\bm{P^{\ell}})
			\end{bmatrix}
		 \nonumber	\\
			&= (\{\bm{P}(\mu)\}^{\ell})^T \: \bm{L}_G\: \{\bm{P}(\mu)\}^{\ell} \label{def:L_ell}
		\end{align}
	Here, $\bm{I}\in \mathbb{R}^{n\times n}$ is the identity matrix and $0<\mu< 1$ is a scaling parameter that controls the conditional number of the preconditioner. We discuss the choice of $\mu$ in detail in Section~\ref{sec:H_mu}. Such a construction has been proposed in~\cite{griebel1994multilevel} for recasting the MG algorithm as an iterative method for the semidefinite expanded linear system.  Note that since $\{\bm{P}(\mu)\}^{\ell} \cdot \bm{1}^{\tilde{n}} = c\cdot\bm{1}^n$, where $c$ is a constant and $\bm{L}_G \cdot \bm{1}^n=\bm{0}$, we have $\bm{L}_{\widetilde{G}^{\mu}} \cdot \bm{1}^{\tilde{n}}=(\{\bm{P}(\mu)\}^{\ell})^T \: \bm{L}_G\: \{\bm{P}(\mu)\}^{\ell}\cdot \bm{1}^{\tilde{n}} = (\{\bm{P}(\mu)\}^{\ell})^T \: \bm{L}_G\: c\cdot\bm{1}^n=\bm{0}$. This means $\bm{1}^{\tilde{n}}\in \operatorname{Null}(\bm{L}_{\widetilde{G}^{\mu}})$. However, the corresponding expanded graph $\widetilde{G}^{\mu}$ contains negative weighted edges as we showcase with an example later this section. This leads to further modifications to $\widetilde{G}^{\mu}$ and the construction of PEGP and MSP. 
	
	We include the following proof to show that, by our construction, solving~\eqref{eqn:originalsys} and~\eqref{eqn:expandedsys} are equivalent in the following sense. 
	\begin{theorem}
		Let $\vec{\tilde{x}}$ be a solution to the expanded graph Laplaican system~\eqref{eqn:expandedsys}, then 
		\begin{equation*}
			\vec{x} = \{\bm{P}(\mu)\}^{\ell}\:\vec{\tilde{x}}
		\end{equation*}
		is a solution to the original graph Laplacian system~\eqref{eqn:originalsys}.
	\end{theorem}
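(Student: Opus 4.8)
The plan is to exploit the Galerkin-type structure $\bm{L}_{\widetilde{G}^{\mu}} = (\{\bm{P}(\mu)\}^{\ell})^T \bm{L}_G \{\bm{P}(\mu)\}^{\ell}$ from~\eqref{def:L_ell} together with the fact that the right-hand side of the expanded system is the restriction $\vec{\tilde{b}} = (\{\bm{P}(\mu)\}^{\ell})^T \vec{b}$ of the original one. Throughout I abbreviate $\bm{P} := \{\bm{P}(\mu)\}^{\ell} \in \mathbb{R}^{n \times \tilde{n}}$, so that the expanded system~\eqref{eqn:expandedsys} reads $\bm{P}^T \bm{L}_G \bm{P}\, \vec{\tilde{x}} = \bm{P}^T \vec{b}$.

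First I would substitute the candidate $\vec{x} = \bm{P}\vec{\tilde{x}}$ into the residual of the original system and rewrite the hypothesis as $\bm{P}^T(\bm{L}_G \vec{x} - \vec{b}) = \bm{0}$. Thus, setting $\vec{r} := \bm{L}_G \vec{x} - \vec{b} \in \mathbb{R}^n$, everything reduces to showing that $\bm{P}^T \vec{r} = \bm{0}$ forces $\vec{r} = \bm{0}$; that is, that $\bm{P}^T$ has trivial kernel.

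The key observation, and the only place where the specific form of $\{\bm{P}(\mu)\}^{\ell}$ matters, is that its leading block is the identity $\bm{I} \in \mathbb{R}^{n\times n}$. Consequently the top $n$ rows of $\bm{P}^T$ are exactly $\bm{I}$, so the first block of the vector $\bm{P}^T \vec{r}$ equals $\vec{r}$ itself. Hence $\bm{P}^T \vec{r} = \bm{0}$ immediately yields $\vec{r} = \bm{0}$, i.e. $\bm{L}_G \vec{x} = \vec{b}$, which is the claim.

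I expect the routine linear algebra to be entirely painless; the only genuine subtlety is the bookkeeping about the right-hand sides. In particular one must be explicit that $\vec{\tilde{b}}$ is obtained from $\vec{b}$ by the same restriction $\bm{P}^T$ that defines $\bm{L}_{\widetilde{G}^{\mu}}$ --- without this compatibility the two systems need not correspond --- and one should note that the statement only asserts that $\bm{P}\vec{\tilde{x}}$ is \emph{a} solution, consistent with $\bm{L}_G$ being singular with nullspace $\operatorname{span}\{\vec{1}\}$ so that solutions are unique only up to that nullspace.
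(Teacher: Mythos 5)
Your proof is correct, and it is in fact more careful than the one in the paper about the direction of the implication. The paper's proof starts from $\bm{L}_G\vec{x}=\vec{b}$ and multiplies on the left by $(\{\bm{P}(\mu)\}^{\ell})^T$ to arrive at $\bm{L}_{\widetilde{G}^{\mu}}\vec{\tilde{x}}=\vec{\tilde{b}}$; read literally, that chain establishes that the expanded system is a consequence of the original one, whereas the theorem asserts the converse: a solution of the expanded system yields a solution of the original. To reverse the paper's second step one must pass from $(\{\bm{P}(\mu)\}^{\ell})^T(\bm{L}_G\vec{x}-\vec{b})=\bm{0}$ to $\bm{L}_G\vec{x}=\vec{b}$, and that is precisely the injectivity of $(\{\bm{P}(\mu)\}^{\ell})^T$, which you supply explicitly by noting that its leading $n\times n$ block is the identity, so the first $n$ components of $(\{\bm{P}(\mu)\}^{\ell})^T\vec{r}$ are $\vec{r}$ itself. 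Both arguments rest on the same Galerkin structure $\bm{L}_{\widetilde{G}^{\mu}}=(\{\bm{P}(\mu)\}^{\ell})^T\bm{L}_G\{\bm{P}(\mu)\}^{\ell}$ and the compatibility $\vec{\tilde{b}}=(\{\bm{P}(\mu)\}^{\ell})^T\vec{b}$, but your version makes explicit the one nontrivial ingredient (trivial kernel of the restriction) that the paper leaves implicit; your closing remark that the conclusion is only about \emph{a} solution, given the nullspace $\operatorname{span}\{\vec{1}\}$ of $\bm{L}_G$, is also a worthwhile clarification.
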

	
	\begin{proof}
		Note that,
		\begin{align*}
			\bm{L}_G\:\vec{x} &= \vec{b},\\
			(\{\bm{P}(\mu)\}^{\ell})^T \:\bm{L}_G\:\vec{x} &=(\{\bm{P}(\mu)\}^{\ell})^T\:\vec{b},\\
			(\{\bm{P}(\mu)\}^{\ell})^T \:\bm{L}_G\:\{\bm{P}(\mu)\}^{\ell}\:\vec{\tilde{x}} &=(\{\bm{P}(\mu)\}^{\ell})^T\:\vec{b},\\
			\:\bm{L}_{\widetilde{G}^{\mu}}\:\vec{\tilde{x}} &=\vec{\tilde{b}}, \qquad \text{by \eqref{def:L_ell}}
		\end{align*}
		where $\vec{\tilde{b}} = (\{\bm{P}(\mu)\}^{\ell})^T\:\vec{b}$ and $\vec{x} = \{\bm{P}(\mu)\}^{\ell}\:\vec{\tilde{x}}$, which completes the proof.
	\end{proof}
	
	Solving the original linear system in~\eqref{eqn:originalsys} and the expanded one~\eqref{eqn:expandedsys} are mathematically equivalent. The main motivation of introducing the expanded graph $\widetilde{G}^{\mu}$ is that the expanded graph has a multilayer structure which naturally leads to a smaller diameter compared with the original graph. This property is favored when finding spanning subgraph preconditioners such as the low stretch spanning trees or sparsifiers~\cite{alon1995graph,elkin2008lower,koutis2014approaching}.
	In the MG literature, it is also well-known that the expanded linear system is much better conditioned than the original linear system~\cite{griebel1994multilevel}.

	Due to the construction in~\eqref{def:L_ell}, $\bm{L}_{\widetilde{G}^{\mu}}$ naturally has block structure that corresponds to the multilevel structure of the expanded graph $\widetilde{G}^{\mu}$. The diagonal blocks are the graph Laplacians of each layer, respectively. The off-diagonal blocks of $\bm{L}_{\widetilde{G}^{\mu}}$ represent the edges across the layers, which include negatively weighted edges. The negative edges are unfavorable for common algorithms that find spectral equivalent trees or sparsifiers. In addition, while each row sum of $\bm{L}_{\widetilde{G}^{\mu}}$ is still 0 (similarly, the column sum), $\bm{L}_{\widetilde{G}^{\mu}}$ is not an M-matrix anymore due to the negative weights. To help visualize the negative edges, we present a two-level example to demonstrate the structure of the expanded graph and further analyze the negative edges.
	
	The original graph $G$ is an unweighted (or all the edge weights are $1$) regular 2D lattice with 8 nodes as shown in Figure~\ref{fig:prototype}. Assuming that the $4$ aggregations on the first layer are $\{1,2\},\{3,4\},\{5,6\},\{7,8\}$, we can expand the graph in Figure~\ref{fig:prototype} to have a second layer with four new nodes $9,10,11,12$ as the aggregation representatives in Figure~\ref{fig:prototype_ex}. 
	\begin{figure}[h!]
		\begin{center}
			\includegraphics[width=10cm]{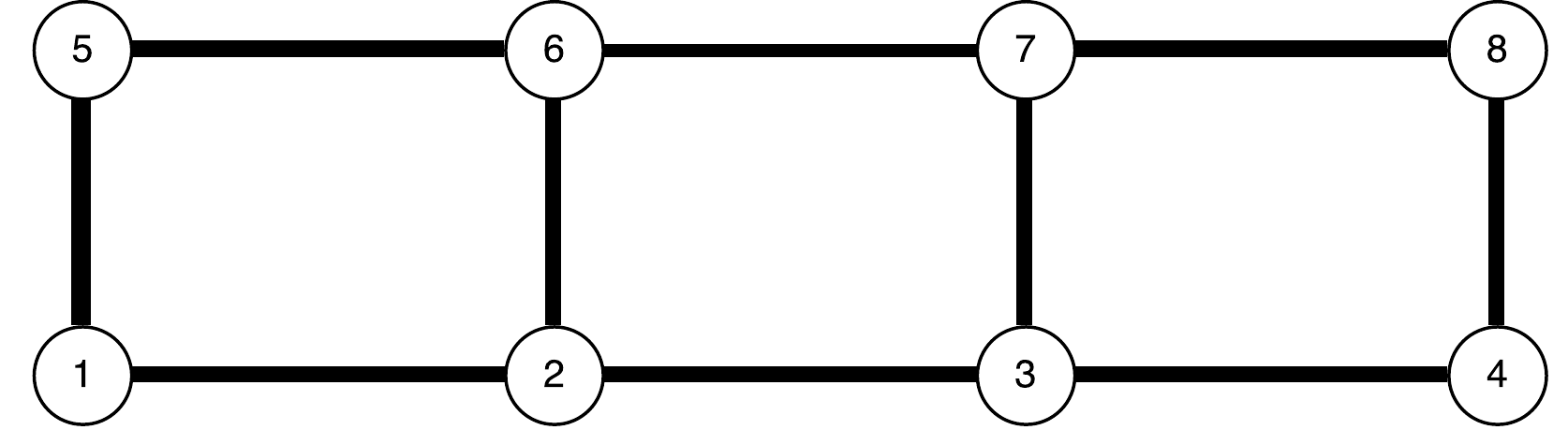}
		\end{center}
		\caption{Original graph G}\label{fig:prototype}
	\end{figure}
		%
	\begin{figure}[h!]
		\begin{center}
			\includegraphics[width=10cm]{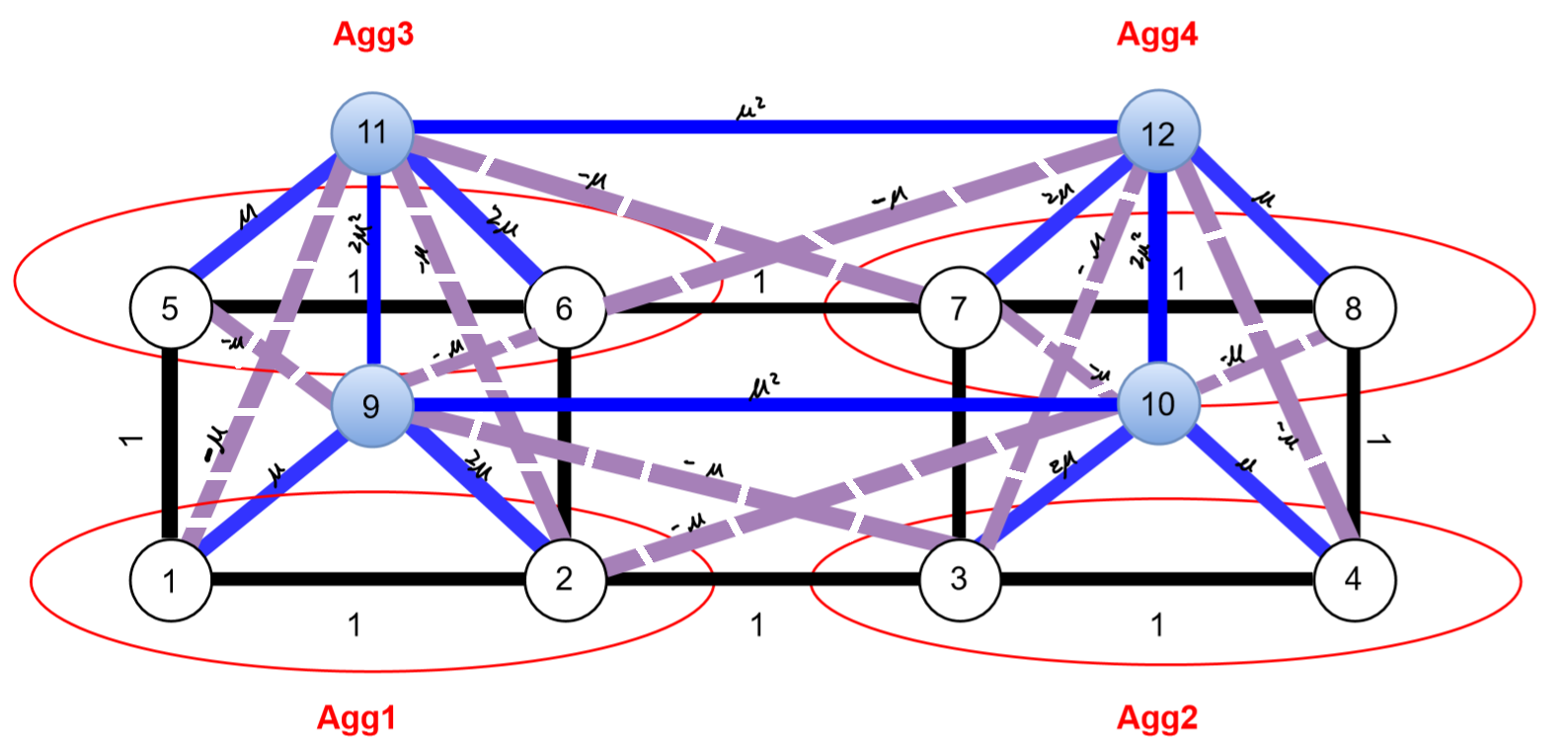}
		\end{center}
		\caption{Two-level expanded graph $\widetilde{G}^{\mu}$}\label{fig:prototype_ex}
	\end{figure}
	
	The prolongation operator according to the first layer aggregations $\{1,2\},\{3,4\},\{5,6\},\{7,8\}$ is,
	\begin{equation}\label{def:P_12}
		\bm{P}_1^2=
		\begin{bmatrix}
			1   & 	0 	& 	0 	& 0	 \\
			1   &   0   & 	0 	& 0  \\
			0   &  1  	&   0   & 0  \\
			0   &  1  	&   0   & 0  \\
			0   &  0	&   1	& 0  \\
			0   &  0    &   1   & 0  \\
			0   &  0    &   0   & 1  \\
			0   &  0    &   0   & 1  
		\end{bmatrix},
	\end{equation}
	and the graph Laplacian of the expanded graph has the following $2 \times 2$ block structure since we have two layers in this example.
	\begin{equation}\label{def:LG_ex}
		\bm{L}_{\widetilde{G}^{\mu}} = 
		\begin{bmatrix}
			\bm{I}\\
			(-\mu\bm{P^2_1})^T 
		\end{bmatrix}\: \bm{L}_G\: 
		\begin{bmatrix}
			\bm{I} & (-\mu\bm{P^2_1}) 
		\end{bmatrix} = \begin{bmatrix}
			\bm{L}_G & \bm{L}_G(-\mu\bm{P^2_1}) \\
			(-\mu\bm{P^2_1})^T\bm{L}_G & \mu^2(\bm{P^2_1})^T\bm{L}_G\bm{P^2_1}
		\end{bmatrix}.
	\end{equation}
	Specifically:
	\[
	\bm{L}_{\widetilde{G}^{\mu}}= 
	\left(\begin{array}{cccccccc|cccc}
		2 & -1 & 0 & 0 & -1& 0 & 0 & 0 & -\mu & 0 & \mu & 0\\
		-1 & 3 & -1 & 0 & 0 & -1 & 0 & 0 & -2\mu & \mu & \mu & 0\\
		0 & -1 & 3 & -1 & 0 & 0 & -1 & 0 & \mu & -2\mu & 0 & \mu\\
		0 & 0 & -1 & 2 & 0 & 0 & 0 & -1 & 0 & -\mu & 0 & \mu\\
		-1 & 0 & 0 & 0 & 2 & -1 & 0 & 0 & \mu & 0 & -\mu & 0\\
		0 & -1 & 0 & 0 & -1 & 3 & -1 & 0 & \mu & 0 & -2\mu & \mu\\
		0 & 0  & -1 & 0 & 0 & -1 & 3 & -1 & 0 & \mu & \mu & -2\mu\\
		0 & 0 & 0 & -1 & 0 & 0 & -1 & 2 & 0 & \mu & 0 & -\mu \\
		\hline
		-\mu & -2\mu & \mu  & 0 & \mu & \mu & 0 & 0 & 3\mu^2 & -\mu^2 & -2\mu^2 & 0\\
		0 & \mu & -2\mu & -\mu & 0 & 0 & \mu & \mu & -\mu^2 & 3\mu^2 & 0 & -2\mu^2 \\	
		\mu & \mu & 0 & 0 & -\mu & -2\mu & \mu & 0 & -2\mu^2 & 0 & 3\mu^2 & -\mu^2 \\
		0 & 0 & \mu & \mu & 0 & \mu & -2\mu & -\mu & 0 & -2\mu^2 & -\mu^2 & 3\mu^2
	\end{array}\right).
	\]
	We can immediately see that there are positive off-diagonal entries in the above matrix which correspond to the negative-weighted edges in the expanded graph.  These negative edges are the edges (dashed purple edges in Figure~\ref{fig:prototype_ex}) connecting vertices on the fine grid (first layer in Figure~\ref{fig:prototype_ex}) and vertices in different aggregations on the coarse grid (second layer in Figure~\ref{fig:prototype_ex}). The magnitude of a negatively weighted edge $\tilde{\omega}_{(u_i,v_j)}(\mu)$, where $u_i$ denotes a node on the first layer in aggregation $i$ and $v_j$ is a node on the second layer represents aggregation $j$, $i\neq j$, is the scaled weight sum of edges that connect vertex $u_i$ with vertices that belong to the aggregate $j$ on the first level. For instance, $\tilde{\omega}_{(1,11)}(\mu)$ in Figure~\ref{fig:prototype_ex} is calculated by summing up edges between node 1 and nodes in the third aggregation (i.e. nodes 5 and 6). Edge $(1,5)$ satisfies the requirement, so $\tilde{\omega}_{(1,11)}(\mu) = \tilde{\omega}_{(1,5)}(\mu)\cdot \mu=1\cdot \mu = \mu.$
	
	
	Although the expanded graph has a small diameter which is favored by existing sparsifiers and spanning subgraph preconditioners, those negatively weighted edges are troublesome since most existing fast algorithms for finding sparsifiers and/or spanning subgraph preconditioners are designed for positive weighted graphs only~\cite{alon1995graph,bern2001support,abraham2012using}.  Therefore, they cannot be applied directly to our expanded graph.  Next, we will argue that we can safely drop those negatively weighted edges and only keep the positively weighted edges on the expanded graph.  The scaling parameter $\mu$ plays an essential role here to guarantee the results positively weighted graph is spectrally equivalent to the expanded graph. 
	
	\subsection{Extracting the Positive Subgraph}\label{sec:H_mu}
	As mentioned in the previous section, with the negative edges in the expanded graph, we cannot directly apply the existing algorithms for constructing spectrally equivalent sparsifiers or subgraph preconditioners on it. We propose to extract only the positive edges from the expanded graph to construct $\widetilde{G}^{\mu}$'s positively weighted subgraph $\widetilde{H}^{\mu}$. Therefore, any algorithms that find good quality sparsifiers, such as in~\cite{elkin2008lower,silva2015sparse} can be performed on the positive subgraph $\widetilde{H}^{\mu}$. Moreover, we show that $\widetilde{H}^{\mu}$ is spectrally equivalent to the expanded graph $\widetilde{G}^{\mu}$ when the parameter $\mu$ is chosen properly.

	We denote $\widetilde{H}^{\mu}$'s graph laplacian as $\bm{L}_{\widetilde{H}^{\mu}}$ and it is now diagonally dominant. As an example, Figure~\ref{fig:spanningsubgraph} is the subgraph of $\widetilde{G}^{\mu}$ in Figure~\ref{fig:prototype_ex} with only positively weighted edges.  
	\begin{figure}[h!]
		\begin{center}
			\includegraphics[width=10cm]{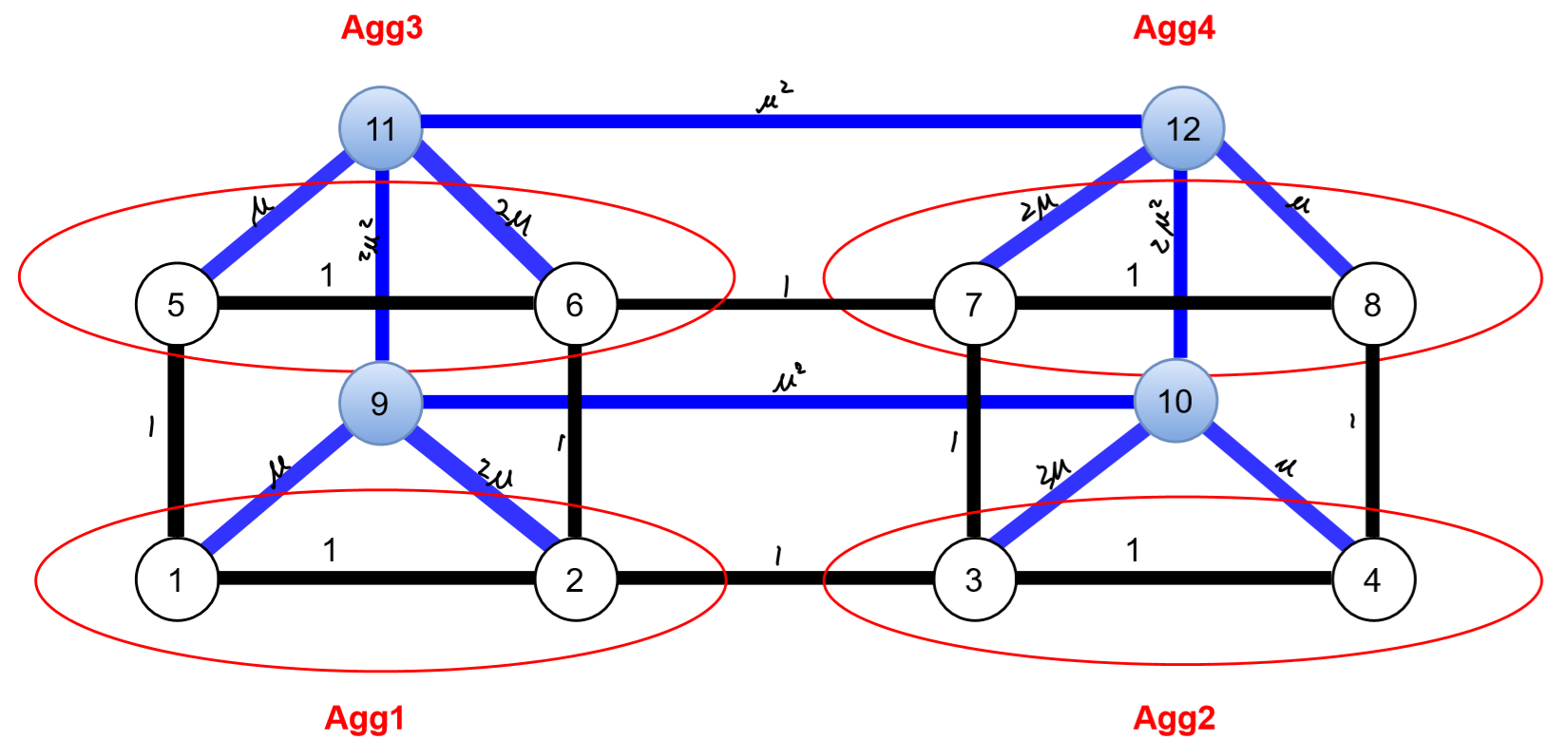}
		\end{center}
		\caption{Spanning positive subgraph $\widetilde{H}^{\mu}$ of $\widetilde{G}^{\mu}$}\label{fig:spanningsubgraph}
	\end{figure}
	For this example, $\bm{L}_{\widetilde{H}^{\mu}}$ is as follows:
	{\tiny 
		\[
		\bm{L}_{\widetilde{H}^{\mu}} = 
		\left(\begin{array}{cccccccc|cccc}
			2+\mu & -1 & 0 & 0 & -1& 0 & 0 & 0 & -\mu & 0 & 0 & 0\\
			-1 & 3+2\mu & -1 & 0 & 0 & -1 & 0 & 0 & -2\mu & 0 & 0 & 0\\
			0 & -1 & 3+2\mu & -1 & 0 & 0 & -1 & 0 & 0 & -2\mu & 0 & 0\\
			0 & 0 & -1 & 2+\mu & 0 & 0 & 0 & -1 & 0 & -\mu & 0 & 0\\
			-1 & 0 & 0 & 0 & 2+\mu & -1 & 0 & 0 & 0 & 0 & -\mu & 0\\
			0 & -1 & 0 & 0 & -1 & 3+2\mu & -1 & 0 & 0 & 0 & -2\mu & 0\\
			0 & 0  & -1 & 0 & 0 & -1 & 3+2\mu & -1 & 0 & 0 & 0 & -2\mu\\
			0 & 0 & 0 & -1 & 0 & 0 & -1 & 2+\mu & 0 & 0 & 0 & -\mu \\
			\hline
			-\mu & -2\mu & 0  & 0 & 0 & 0 & 0 & 0 & 3\mu^2+3\mu & -\mu^2 & -2\mu^2 & 0\\
			0 & 0 & -2\mu & -\mu & 0 & 0 & 0 & 0 & -\mu^2 & 3\mu^2+3\mu & 0 & -2\mu^2 \\	
			0 & 0 & 0 & 0 & -\mu & -2\mu & 0 & 0 & -2\mu^2 & 0 & 3\mu^2+3\mu & -\mu^2 \\
			0 & 0 & 0 & 0 & 0 & 0 & -2\mu & -\mu & 0 & -2\mu^2 & -\mu^2 & 3\mu^2+3\mu
		\end{array}\right).
		\]
	}
	
	The rest of this section is dedicated to show that, with a properly chosen $\mu$, $\bm{L}_{\widetilde{H}^{\mu}}$ is spectral equivalent to $\bm{L}_{\widetilde{G}^{\mu}}$. To start, let us consider the following generalize eigenvalue problem:
	\begin{equation}\label{eqn:general-eigenvalue}
		\bm{L}_{\widetilde{G}^{\mu}} \vec{v} = \lambda \bm{L}_{\widetilde{H}^{\mu}} \vec{v}.
	\end{equation}
	Depending on where the vector $\vec{v}$ resides in, we have three different cases:
	\begin{enumerate}
		\item When $\vec{v}\in\operatorname{Null}(\bm{L}_{\widetilde{H}^{\mu}})$, $\lambda$ could be any number. Since $\bm{L}_{\widetilde{H}^{\mu}}$ is a graph Laplacian matrix of positively weighted graph $\widetilde{H}^{\mu}$, constant vector is its nullspace. (i.e. $\operatorname{Null}(\bm{L}_{\widetilde{H}^{\mu}})=\operatorname{span}\{ \vec{1} \}$). If $\vec{v}\in\operatorname{span}\{ \vec{1} \}$, it is also in the null space of $\bm{L}_{\widetilde{G}^{\mu}}$ as mentioned in Section~\ref{sec:G_tilde}. In this case, $\bm{L}_{\widetilde{G}^{\mu}} \vec{v} = \bm{L}_{\widetilde{H}^{\mu}} \vec{v} = \bm{0}$. Since we cannot bound $\lambda$, we avoid letting $\vec{v}\in\operatorname{Null}(\bm{L}_{\widetilde{H}^{\mu}})$.
		\item When $\vec{v}\perp\operatorname{Null}(\bm{L}_{\widetilde{H}^{\mu}}) $ and $\vec{v}\in\operatorname{Null}(\bm{L}_{\widetilde{G}^{\mu}})$, $\lambda = 0$. To prove that such $\vec{v}$ exists, we claim the following lemma:
		\begin{lemma}\label{lem:dim_G_tilde}
		    Given that graph $G$ and its expanded graph $\widetilde{G}^{\mu}$ are both simply connected, $\dim(\operatorname{Null}(\bm{L}_{\widetilde{G}^{\mu}}))=\tilde{n}-n+1$. Here, $n$ and $\tilde{n}$ represent the number of nodes in $G$ and $\widetilde{G}^{\mu}$ respectively. 
		\end{lemma}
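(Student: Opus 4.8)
The plan is to exploit the factored form $\bm{L}_{\widetilde{G}^{\mu}} = Q^T \bm{L}_G\, Q$, where I abbreviate $Q := \{\bm{P}(\mu)\}^{\ell} \in \mathbb{R}^{n \times \tilde{n}}$, and reduce the computation of $\dim(\operatorname{Null}(\bm{L}_{\widetilde{G}^{\mu}}))$ to two ingredients: the rank of $Q$ and the nullity of $\bm{L}_G$. First I would record that, because the leading block of $\{\bm{P}(\mu)\}^{\ell}$ is the $n \times n$ identity $\bm{I}$, the matrix $Q$ has full row rank $n$; hence $Q$ is surjective onto $\mathbb{R}^n$ and, by rank-nullity, $\dim(\operatorname{Null}(Q)) = \tilde{n} - n$. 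This is the only place the structure of the composite prolongation enters, and notably it needs no assumption on the individual prolongations $\bm{P}^{\ell}$.

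The next step is to characterize $\operatorname{Null}(\bm{L}_{\widetilde{G}^{\mu}})$ in terms of $Q$ and $\bm{L}_G$. Since $G$ is positively weighted, $\bm{L}_G = \bm{B}^T \bm{W} \bm{B} = M^T M$ with $M := \bm{W_{1/2}}\,\bm{B}$, so for any $\vec{v}$ one has $\vec{v}^T \bm{L}_{\widetilde{G}^{\mu}} \vec{v} = \| M Q \vec{v} \|^2$. Positive semidefiniteness then yields the chain $\vec{v} \in \operatorname{Null}(\bm{L}_{\widetilde{G}^{\mu}}) \iff M Q \vec{v} = \bm{0} \iff \bm{L}_G\, Q \vec{v} = \bm{0} \iff Q\vec{v} \in \operatorname{Null}(\bm{L}_G)$, i.e.\ $\operatorname{Null}(\bm{L}_{\widetilde{G}^{\mu}}) = Q^{-1}(\operatorname{Null}(\bm{L}_G))$. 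Because $G$ is simply connected, $\operatorname{Null}(\bm{L}_G) = \operatorname{span}\{\bm{1}^n\}$ is one-dimensional.

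Finally I would count the dimension of this preimage. Restricting the surjection $Q$ to $Q^{-1}(\operatorname{span}\{\bm{1}^n\})$ produces a surjection onto $\operatorname{span}\{\bm{1}^n\}$ (surjectivity of $Q$ guarantees the target lies in its range) whose kernel is exactly $\operatorname{Null}(Q)$; rank-nullity applied to this restriction gives $\dim(\operatorname{Null}(\bm{L}_{\widetilde{G}^{\mu}})) = \dim(\operatorname{Null}(Q)) + 1 = (\tilde{n} - n) + 1$, which is the claim. Equivalently, one can argue directly on ranks: since $Q$ is surjective, $\operatorname{Range}(MQ) = \operatorname{Range}(M)$, so $\operatorname{rank}(\bm{L}_{\widetilde{G}^{\mu}}) = \operatorname{rank}(M) = \operatorname{rank}(\bm{L}_G) = n - 1$, whence the nullity equals $\tilde{n} - (n-1)$.

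I expect the only genuine subtlety to be the middle step, namely justifying that $\operatorname{Null}(Q^T \bm{L}_G\, Q) = Q^{-1}(\operatorname{Null}(\bm{L}_G))$ rather than a strictly larger space. This is precisely where positive semidefiniteness of $\bm{L}_G$ is indispensable: without it, $Q^T \bm{L}_G\, Q \vec{v} = \bm{0}$ would not force $\bm{L}_G\, Q \vec{v} = \bm{0}$, and the identity could fail. Everything else is bookkeeping with rank-nullity. I would also note that only the simple-connectedness of $G$ (used to pin down $\operatorname{Null}(\bm{L}_G) = \operatorname{span}\{\bm{1}^n\}$) is actually needed; the stated hypothesis on $\widetilde{G}^{\mu}$ does not enter the dimension count.
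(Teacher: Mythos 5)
Your proof is correct and follows essentially the same route as the paper's: both factor $\bm{L}_{\widetilde{G}^{\mu}}$ as the Gram matrix of $\bm{W_{1/2}}\bm{B}\,\{\bm{P}(\mu)\}^{\ell}$ and reduce the claim to showing this factor has rank $n-1$, using the full row rank of the composite prolongation (from its leading identity block) together with $\operatorname{rank}(\bm{W_{1/2}}\bm{B})=n-1$. The only real difference is in the last step: the paper pins down the rank of the product by sandwiching it between $\min(\operatorname{rank} A,\operatorname{rank} B)$ and Sylvester's lower bound $\operatorname{rank} A+\operatorname{rank} B-n$, whereas you use the surjectivity of $\{\bm{P}(\mu)\}^{\ell}$ directly (equivalently, your preimage count), which is a slightly cleaner way to reach the same conclusion.
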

		We include the following proof for lemma~\ref{lem:dim_G_tilde}:
		\begin{proof}
		    We assume the original graph $G$ and its expanded graph $\tilde{G}^{\mu}$ are simply connected.
		    \begin{align*}
		        \bm{L}_{\widetilde{G}^{\mu}} & = (\{\bm{P}(\mu)\}^{\ell})^T\:\bm{L}_G\:\{\bm{P}(\mu)\}^{\ell} \\
		                                      & = (\{\bm{P}(\mu)\}^{\ell})^T\:\bm{B}^T\:\bm{W_{1/2}^T}\:\bm{W_{1/2}}\:\bm{B}\{\bm{P}(\mu)\}^{\ell}
		    \end{align*}
		    We define $\widetilde{\bm{B}}: = \bm{W_{1/2}}\:\bm{B}$, where $\bm{B}$ and $\bm{W_{1/2}}$ are defined in Section~\ref{sec:graph}. Then, we have $$\bm{L}_{\widetilde{G}^{\mu}} = \left(\{\bm{P}(\mu)\}^{\ell}\:\widetilde{\bm{B}}\right)^T\:\left(\{\bm{P}(\mu)\}^{\ell}\:\widetilde{\bm{B}}\right)$$ and $$rank(\bm{L}_{\widetilde{G}^{\mu}}) = rank\left(\left(\{\bm{P}(\mu)\}^{\ell}\:\widetilde{\bm{B}}\right)^T\:\left(\{\bm{P}(\mu)\}^{\ell}\right)\:\widetilde{\bm{B}}\right) = rank(\{\bm{P}(\mu)\}^{\ell}\:\widetilde{\bm{B}}).$$
		    
		    Notice that $\{\bm{P}(\mu)\}^{\ell}\in\mathbb{R}^{n\times \widetilde{n}}$ is in the echelon form, $rank(\{\bm{P}(\mu)\}^{\ell}) = n$. The rank of weighted incidence matrix $\widetilde{\bm{B}}$ is $n - 1$. Recall  two inequalities: 
		    \begin{align*}
		        rank(\{\bm{P}(\mu)\}^{\ell}\:\widetilde{\bm{B}}) &\leqslant \min\left(rank(\{\bm{P}(\mu)\}^{\ell}\right), rank(\widetilde{\bm{B}})) = \min(n, n-1) = n-1 \\
		        rank(\{\bm{P}(\mu)\}^{\ell}\:\widetilde{\bm{B}}) &\geqslant rank(\{\bm{P}(\mu)\}^{\ell} +  rank(\widetilde{\bm{B}}) - n = n + (n-1) -n = n-1 
		    \end{align*}
		    We can get that $rank(\{\bm{P}(\mu)\}^{\ell}\:\widetilde{\bm{B}}) = rank(\bm{L}_{\widetilde{G}^{\mu}})=n-1$. Therefore, the dimension of $\operatorname{Null}(\bm{L}_{\widetilde{G}^{\mu}}) = \tilde{n} - (n - 1) = \tilde{n}-n+1$.
		\end{proof}
		This goes to show that besides the constant vector that is in both $\operatorname{Null}(\bm{L}_{\widetilde{H}^{\mu}})$ and $\operatorname{Null}(\bm{L}_{\widetilde{G}^{\mu}})$, there are $\tilde{n}-n$ other linearly independent vectors that are in $\operatorname{Null}(\bm{L}_{\widetilde{G}^{\mu}})$. Therefore, there exist vectors $\vec{v}$ in nullspace of $\bm{L}_{\widetilde{G}^{\mu}}$ but not in nullspace of $\bm{L}_{\widetilde{H}^{\mu}}$. When $\vec{v}\perp\operatorname{Null}(\bm{L}_{\widetilde{H}^{\mu}}) $ and $\vec{v}\in\operatorname{Null}(\bm{L}_{\widetilde{G}^{\mu}})$, the left-hand-side is always 0 whereas $\bm{L}_{\widetilde{H}^{\mu}} \vec{v}$ is never 0, thus $\lambda$ has to be 0. Since this is a trivial case, we also avoid letting $\vec{v}\perp\operatorname{Null}(\bm{L}_{\widetilde{H}^{\mu}}) $ and $\vec{v}\in\operatorname{Null}(\bm{L}_{\widetilde{G}^{\mu}})$.
		
		\item When $\vec{v}\perp\operatorname{Null}(\bm{L}_{\widetilde{G}^{\mu}})$, there are finite number of positive $\lambda$'s. Define the condition number of $\bm{L}_{\widetilde{H}^{\mu}}^\dagger\bm{L}_{\widetilde{G}^{\mu}},$ as $\kappa(\bm{L}_{\widetilde{H}^{\mu}}^\dagger\bm{L}_{\widetilde{G}^{\mu}})=\dfrac{\lambda_{\max}(\bm{L}_{\widetilde{H}^{\mu}}^\dagger\bm{L}_{\widetilde{G}^{\mu}})}{\lambda_{\min}(\bm{L}_{\widetilde{H}^{\mu}}^\dagger\bm{L}_{\widetilde{G}^{\mu}})}$. We include the following Lemmas~\ref{lem:upper}~\ref{lem:lower-mu}~\ref{lem:lower} and Theorem~\ref{thm:MSP_cond} to show that $\kappa(\bm{L}_{\widetilde{H}^{\mu}}^\dagger\bm{L}_{\widetilde{G}^{\mu}})$ is bounded with proper $
		\mu$'s.
	\end{enumerate}
	
	
	Now we are ready to discuss the bounds of the generalized eigenvalue problem~\eqref{eqn:general-eigenvalue}. First, we show that eigenvalues of $\bm{L}_{\widetilde{H}^{\mu}}^\dagger\bm{L}_{\widetilde{G}^{\mu}}$ are bounded by $1$ from above. 
	\begin{lemma}\label{lem:upper}
		For any $\mu > 0$ and $\vec{v}\in \mathbb{R}^{\widetilde{n}}$, we have $(\bm{L}_{\widetilde{G}^{\mu}} \vec{v}, \vec{v}) \leqslant (\bm{L}_{\widetilde{H}^{\mu}}\vec{v}, \vec{v})$. This implies that the upper bound of $\lambda(\bm{L}_{\widetilde{H}^{\mu}}^\dagger\bm{L}_{\widetilde{G}^{\mu}})$ is 1, when $ \vec{v} \perp \operatorname{Null}(\bm{L}_{\widetilde{H}^{\mu}})$.
	\end{lemma}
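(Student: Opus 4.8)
The plan is to exploit the fact that $\widetilde{H}^{\mu}$ is obtained from $\widetilde{G}^{\mu}$ by deleting exactly the negatively weighted edges, so that the two Laplacians differ only by the contribution of those negative edges. First I would invoke the edge-wise quadratic form identity valid for any (possibly signed) weighted graph Laplacian. Writing the support-theoretic decomposition $\bm{L} = \sum_{(i,j)\in E} \tilde{\omega}_{ij}(\vec{e}_i - \vec{e}_j)(\vec{e}_i - \vec{e}_j)^T$, where $\vec{e}_i$ are the standard basis vectors, one has
\[
(\bm{L}\vec{v}, \vec{v}) = \sum_{(i,j)\in E} \tilde{\omega}_{ij}\,(v_i - v_j)^2 .
\]
This identity only uses the rank-one structure of each edge term, so it holds regardless of the sign of the weights; crucially, it automatically accounts for the diagonal degree corrections, so I never have to track them separately.

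Applying this to both graphs, and using that $\widetilde{H}^{\mu}$ retains precisely the edges in $E^+(\widetilde{G}^{\mu})$ while $\widetilde{G}^{\mu}$ carries all edges in $E^+ \cup E^-$, the difference of the quadratic forms collapses to a sum over the negative edges only:
\[
(\bm{L}_{\widetilde{G}^{\mu}}\vec{v}, \vec{v}) - (\bm{L}_{\widetilde{H}^{\mu}}\vec{v}, \vec{v}) = \sum_{(i,j)\in E^-(\widetilde{G}^{\mu})} \tilde{\omega}_{ij}(\mu)\,(v_i - v_j)^2 .
\]
Every weight $\tilde{\omega}_{ij}(\mu)$ in this sum is negative by the definition of $E^-$, while each $(v_i - v_j)^2 \geqslant 0$, so the right-hand side is $\leqslant 0$ for every $\vec{v}\in\mathbb{R}^{\widetilde{n}}$ and every $\mu > 0$. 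This yields the first claim $(\bm{L}_{\widetilde{G}^{\mu}}\vec{v}, \vec{v}) \leqslant (\bm{L}_{\widetilde{H}^{\mu}}\vec{v}, \vec{v})$.

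For the eigenvalue bound I would pass to the generalized Rayleigh quotient. If $\vec{v} \perp \operatorname{Null}(\bm{L}_{\widetilde{H}^{\mu}})$ solves \eqref{eqn:general-eigenvalue} with eigenvalue $\lambda$, then since $\bm{L}_{\widetilde{H}^{\mu}}$ is the Laplacian of a connected positively weighted graph (so $\operatorname{Null}(\bm{L}_{\widetilde{H}^{\mu}})=\operatorname{span}\{\vec{1}\}$ and its form is strictly positive off that nullspace), we have $(\bm{L}_{\widetilde{H}^{\mu}}\vec{v}, \vec{v}) > 0$, and therefore
\[
\lambda = \frac{(\bm{L}_{\widetilde{G}^{\mu}}\vec{v}, \vec{v})}{(\bm{L}_{\widetilde{H}^{\mu}}\vec{v}, \vec{v})} \leqslant 1
\]
by the inequality just established. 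Hence every relevant eigenvalue of $\bm{L}_{\widetilde{H}^{\mu}}^\dagger \bm{L}_{\widetilde{G}^{\mu}}$ is at most $1$.

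The step I expect to require the most care is the bookkeeping in the decomposition: one must be sure that dropping the positive off-diagonal entries together with the corresponding correction of the diagonal degrees is exactly captured by the single edge-wise sum, leaving no residual diagonal terms. The $2\times 2$ block example of the previous subsection confirms this concretely (the diagonal at node $1$ passes from $2$ to $2+\mu$ precisely because the edge of weight $-\mu$ is removed), but I would phrase the argument entirely through the rank-one edge decomposition so that it holds in full generality rather than relying on the example.
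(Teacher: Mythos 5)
Your proposal is correct and follows essentially the same route as the paper: both decompose the quadratic form $(\bm{L}_{\widetilde{G}^{\mu}}\vec{v},\vec{v})$ edge-wise into the positive-edge part (which is exactly $(\bm{L}_{\widetilde{H}^{\mu}}\vec{v},\vec{v})$) minus a nonnegative contribution from the negative edges, and then obtain the eigenvalue bound by dividing by $(\bm{L}_{\widetilde{H}^{\mu}}\vec{v},\vec{v})>0$ for $\vec{v}\perp\operatorname{Null}(\bm{L}_{\widetilde{H}^{\mu}})$. The only cosmetic difference is a sign convention: you keep the negative weights signed inside the sum, whereas the paper writes $\tilde{\omega}_{(i,j)}(\mu)$ as a magnitude and pulls the minus sign out front as $-(\bm{L}_{\mu}^{-}\vec{v},\vec{v})$.
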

	\begin{proof}
		
		We analyze the upper bound by writing out the graph Laplacian quadratic form of $\bm{L}_{\widetilde{G}^{\mu}}\in \mathbb{R}^{\widetilde{n}\times \widetilde{n}}$. For any $\vec{v} \in \mathbb{R}^{\widetilde{n}}$,
		\begin{align*}
			(\bm{L}_{\widetilde{G}^{\mu}} \vec{v}, \vec{v})& = \sum\limits_{(i,j)\in E(\widetilde{G}^{\mu})} \tilde{\omega}_{(i,j)}(\mu)(\vec{v}_i-\vec{v}_j)^2\\
			& = \sum\limits_{(i,j)\in E^+(\widetilde{G}^{\mu})} \tilde{\omega}_{(i,j)}(\mu)(\vec{v}_i-\vec{v}_j)^2 - \sum\limits_{(i,j)\in E^-(\widetilde{G}^{\mu})} \tilde{\omega}_{(i,j)}(\mu)(\vec{v}_i-\vec{v}_j)^2\\
			&=(\bm{L}_{\widetilde{H}^{\mu}}\vec{v}, \vec{v}) - (\bm{L}_{\mu}^-\vec{v}, \vec{v})\\
			&\leqslant (\bm{L}_{\widetilde{H}^{\mu}}\vec{v}, \vec{v}).
		\end{align*}
		Here, $E^+(\widetilde{G}^{\mu})$ and $E^-(\widetilde{G}^{\mu})$ are defined in Section~\ref{sec:Pre}. $\bm{L}_{\mu}^-$ denotes the graph Laplacian of graph induced by edges in $E^-(\widetilde{G}^{\mu})$. $\tilde{\omega}_{(i,j)}(\mu)$ represents the magnitude of the edge $(i,j)$ that depends on $\mu$, as defined in Section~\ref{sec:G_tilde}. 
		
		When we restrict $ \vec{v} \perp \operatorname{Null}(\bm{L}_{\widetilde{H}^{\mu}})$, we can divide $(\bm{L}_{\widetilde{H}^{\mu}}\vec{v}, \vec{v})$ on both sides of the inequality and conclude that the upper bound of $\lambda(\bm{L}_{\widetilde{H}^{\mu}}^\dagger\bm{L}_{\widetilde{G}^{\mu}})$ is 1. This completes the proof.
	\end{proof}
	
	\begin{remark}
		Notice that we attain the upper bound 1 when $(\bm{L}_{\widetilde{G}^{\mu}} \vec{v}, \vec{v}) = (\bm{L}_{\widetilde{H}^{\mu}}\vec{v}, \vec{v})$. This corresponds to the case when $ \vec{v} \in \operatorname{Null}(\bm{L}_{\mu}^-)$. It is easy to see that the dimension of $\operatorname{Null}(\bm{L}_{\mu}^-)$ equals to the number of connected components in graph of $\bm{L}_{\mu}^-$. 
	\end{remark}

	To prove for the lower bound of $\lambda(\bm{L}_{\widetilde{H}^{\mu}}^\dagger\bm{L}_{\widetilde{G}^{\mu}})$, it is important that we choose scalar $\mu$ properly. The following lemma establishes the relationship between $\bm{L}^-_{\mu}$ and $\bm{L}_{\widetilde{H}^{\mu}}$, which is essential for the lower bound.
	\begin{lemma}\label{lem:lower-mu}
		For any scalar $0<\rho\leqslant1$, there exists $0<\mu^*(\rho)<1$ such that for any $0<\mu<\mu^*(\rho)$,
		$$(\bm{L}^-_{\mu} \vec{v}, \vec{v})\leqslant\rho \, (\bm{L}_{\widetilde{H}^{\mu}}\vec{v}, \vec{v}), \quad  \forall \ \vec{v} \in \mathbb{R}^{\widetilde{n}} \ \text{and} \ \vec{v}\perp \operatorname{Null}(\bm{L}_{\widetilde{H}^{\mu}}).
		$$
	\end{lemma}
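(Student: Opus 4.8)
The plan is to prove the spectral domination $\bm{L}^-_{\mu}\preceq\rho\,\bm{L}_{\widetilde{H}^{\mu}}$ by playing off a uniform smallness of the negative part against a uniform positivity of the expanded Laplacian. Two structural facts drive the argument. First, every negative edge of $\widetilde{G}^{\mu}$ is a cross-level edge (as in the $2\times2$ example of Section~\ref{sec:G_tilde}, all positive off-diagonal entries sit in the off-diagonal blocks), so each such magnitude carries at least one factor of $\mu$; consequently $\bm{L}^-_{\mu}$ is symmetric positive semidefinite with entries that are polynomials in $\mu$ having \emph{no} constant term, whence $\|\bm{L}^-_{\mu}\|\leqslant C_{-}\,\mu$ for a $\mu$-independent constant $C_{-}$ and all $0<\mu<1$. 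Second, Lemma~\ref{lem:upper} already gives $(\bm{L}_{\widetilde{G}^{\mu}}\vec{v},\vec{v})\leqslant(\bm{L}_{\widetilde{H}^{\mu}}\vec{v},\vec{v})$, so it is enough to dominate $(\bm{L}^-_{\mu}\vec{v},\vec{v})$ by $\rho\,(\bm{L}_{\widetilde{G}^{\mu}}\vec{v},\vec{v})$ and then pass to $\bm{L}_{\widetilde{H}^{\mu}}$; that is, I would bound the generalized Rayleigh quotient $(\bm{L}^-_{\mu}\vec{v},\vec{v})/(\bm{L}_{\widetilde{G}^{\mu}}\vec{v},\vec{v})$ and show it is $O(\mu)$.

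The core estimate is a uniform Poincar\'e-type lower bound for $\bm{L}_{\widetilde{G}^{\mu}}$ off its kernel. Writing $\bm{L}_{\widetilde{G}^{\mu}}=(\{\bm{P}(\mu)\}^{\ell})^{T}\,\bm{L}_G\,\{\bm{P}(\mu)\}^{\ell}$ and letting $\mu\to0$, the composite prolongation tends to $[\,\bm{I}\ \ \bm{0}\,]$, so $\bm{L}_{\widetilde{G}^{\mu}}$ converges entrywise to the block matrix carrying $\bm{L}_G$ in the fine--fine block and zeros elsewhere, which has rank $n-1$. By Lemma~\ref{lem:dim_G_tilde} the rank of $\bm{L}_{\widetilde{G}^{\mu}}$ is exactly $n-1$ for every $\mu\in(0,1)$ as well; since the eigenvalues depend continuously (indeed polynomially) on $\mu$, the $n-1$ nonzero eigenvalues of $\bm{L}_{\widetilde{G}^{\mu}}$ converge to the $n-1$ nonzero eigenvalues of $\bm{L}_G$, each at least $\lambda_{\min}^{+}(\bm{L}_G)>0$, and the remaining $\tilde{n}-n+1$ eigenvalues stay exactly $0$. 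Hence there exist $\mu_0>0$ and $c>0$ with
\[
(\bm{L}_{\widetilde{G}^{\mu}}\vec{v},\vec{v})\geqslant c\,\|\vec{v}\|^{2}\qquad\text{for all }\ \vec{v}\perp\operatorname{Null}(\bm{L}_{\widetilde{G}^{\mu}})\ \text{ and all }\ 0<\mu\leqslant\mu_0 .
\]
Combining this with $(\bm{L}^-_{\mu}\vec{v},\vec{v})\leqslant C_{-}\mu\,\|\vec{v}\|^{2}$ yields $(\bm{L}^-_{\mu}\vec{v},\vec{v})\leqslant(C_{-}\mu/c)\,(\bm{L}_{\widetilde{G}^{\mu}}\vec{v},\vec{v})\leqslant(C_{-}\mu/c)\,(\bm{L}_{\widetilde{H}^{\mu}}\vec{v},\vec{v})$, so the quotient is $O(\mu)$; taking $\mu^{*}(\rho)=\min\{1,\ \rho\,c/C_{-}\}$ gives the claimed inequality for every $0<\mu<\mu^{*}(\rho)$ (in fact for arbitrarily small $\rho$).

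The main obstacle is precisely the mismatch between the two kernels. The uniform lower bound above is available only on $\operatorname{Null}(\bm{L}_{\widetilde{G}^{\mu}})^{\perp}$, whereas the lemma only assumes $\vec{v}\perp\operatorname{Null}(\bm{L}_{\widetilde{H}^{\mu}})=\operatorname{span}\{\vec{1}\}$. By Lemma~\ref{lem:dim_G_tilde} these two kernels differ by a subspace of dimension $\tilde{n}-n>0$: there are nonzero $\vec{v}\in\operatorname{Null}(\bm{L}_{\widetilde{G}^{\mu}})\cap\operatorname{span}\{\vec{1}\}^{\perp}$, and on any such direction $(\bm{L}_{\widetilde{G}^{\mu}}\vec{v},\vec{v})=0$ while $(\bm{L}^-_{\mu}\vec{v},\vec{v})=(\bm{L}_{\widetilde{H}^{\mu}}\vec{v},\vec{v})>0$, so the Rayleigh quotient equals $1$ independently of $\mu$ and the scaling argument cannot close there. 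Controlling these ``$\bm{L}_{\widetilde{G}^{\mu}}$-harmonic'' directions is therefore the crux of the proof: they are exactly the vectors set aside in the Case~2 discussion preceding Lemma~\ref{lem:upper}, and the estimate goes through once the component of $\vec{v}$ in $\operatorname{Null}(\bm{L}_{\widetilde{G}^{\mu}})$ is projected off. Accordingly I would carry the quotient bound out on $\operatorname{Null}(\bm{L}_{\widetilde{G}^{\mu}})^{\perp}$, which is the subspace on which the subsequent condition-number estimate (Case~3) is actually invoked, and I expect the careful bookkeeping of this kernel to be the only nonroutine step.
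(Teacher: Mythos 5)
Your argument is sound on the subspace where you actually prove it, and it is a genuinely different route from the paper's. The paper fixes a unit vector $\vec{v}\perp\vec{1}$, forms the ratio $c_{\vec{v}}(\mu)=(\bm{L}^-_{\mu}\vec{v},\vec{v})/(\bm{L}_{\widetilde{H}^{\mu}}\vec{v},\vec{v})$, argues $c_{\vec{v}}(0)=0$ together with right-continuity at $0$ to get a per-vector threshold $\mu^*_{\vec{v}}(\rho)$, and then appeals to compactness of the unit sphere to set $\mu^*(\rho)=\min_{\vec{v}}\mu^*_{\vec{v}}(\rho)>0$; it is qualitative and yields no formula for $\mu^*$. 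You instead combine two uniform estimates: $\|\bm{L}^-_{\mu}\|\leqslant C_-\mu$ (every negative weight carries at least one factor of $\mu$, with $\mu$-free coefficients), and a uniform lower bound $c>0$ on the smallest nonzero eigenvalue of $\bm{L}_{\widetilde{G}^{\mu}}$ for small $\mu$, obtained from the constant rank $n-1$ (Lemma~\ref{lem:dim_G_tilde}) plus continuity of ordered eigenvalues as $\bm{L}_{\widetilde{G}^{\mu}}$ degenerates to the block matrix with $\bm{L}_G$ in the fine--fine block. This buys an explicit threshold, linear in $\rho$ --- note only that it should read $\mu^*(\rho)=\min\{\mu_0,\;\rho c/C_-\}$ rather than $\min\{1,\rho c/C_-\}$, since the spectral gap is only guaranteed on $(0,\mu_0]$. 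That explicitness is a real advantage: the paper's own remark after Theorem~\ref{thm:MSP_cond} concedes that computing $\mu$ for a given $\rho$ is open, and your argument partially answers it.

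The kernel obstacle you isolate is not a defect of your approach but a genuine flaw in the lemma as stated and in the paper's proof of it. For every $\mu\in(0,1)$ there are unit vectors $\vec{v}\in\operatorname{Null}(\bm{L}_{\widetilde{G}^{\mu}})\cap\{\vec{1}\}^{\perp}$ (dimension $\tilde{n}-n\geqslant1$ by Lemma~\ref{lem:dim_G_tilde}), and for these $(\bm{L}^-_{\mu}\vec{v},\vec{v})=(\bm{L}_{\widetilde{H}^{\mu}}\vec{v},\vec{v})>0$, so the claimed inequality fails for every $\rho<1$ no matter how small $\mu$ is; the paper's proof waves these directions through with ``proves the result with $\rho=1$.'' Moreover both steps of the paper's argument break on exactly this set: (i) $c_{\vec{v}}(0)=0$ is unjustified for $\vec{v}$ vanishing on the fine level, where the denominator is also $O(\mu)$ and the right-limit of the $0/0$ ratio can be positive; (ii) compactness alone does not make $\min_{\vec{v}}\mu^*_{\vec{v}}(\rho)$ positive --- indeed, taking $\vec{v}_\mu$ a unit kernel vector orthogonal to $\vec{1}$ gives $c_{\vec{v}_\mu}(\mu)=1$, hence $\mu^*_{\vec{v}_\mu}(\rho)\leqslant\mu$, so the infimum over the sphere is $0$ for $\rho<1$. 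Your restriction to $\operatorname{Null}(\bm{L}_{\widetilde{G}^{\mu}})^{\perp}$ is exactly the correct repair, and it costs nothing downstream, since that is the hypothesis under which Theorem~\ref{thm:MSP_cond} draws the condition-number conclusion $\kappa(\bm{L}_{\widetilde{H}^{\mu}}^{\dagger}\bm{L}_{\widetilde{G}^{\mu}})\leqslant 1/(1-\rho)$. One caution on your closing sentence: you cannot recover general $\vec{v}\perp\vec{1}$ by ``projecting off'' the kernel component, because your bound $(\bm{L}^-_{\mu}\vec{v},\vec{v})\leqslant C_-\mu\|\vec{v}\|^2$ still sees the kernel part of $\|\vec{v}\|^2$ while $(\bm{L}_{\widetilde{G}^{\mu}}\vec{v},\vec{v})$ does not --- consistent with the statement being simply false off that subspace.
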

	\begin{proof}
		Define $c_{\vec{v}}(\mu):=\dfrac{(\bm{L}^-_{\mu} \vec{v}, \vec{v})}{(\bm{L}_{\widetilde{H}^{\mu}}\vec{v}, \vec{v})}=\dfrac{\sum\limits_{(i,j)\in E^-(\widetilde{G}^{\mu})} \tilde{\omega}_{(i,j)}(\mu)(\vec{v}_i-\vec{v}_j)^2 }{\sum\limits_{(i,j)\in E^+(\widetilde{G}^{\mu})} \tilde{\omega}_{(i,j)}(\mu)(\vec{v}_i-\vec{v}_j)^2}$. $c_{\vec{v}}(\mu)$ is well-defined since $\vec{v} \perp \operatorname{Null}(\bm{L}_{\widetilde{H}^{\mu}}) = \operatorname{span}\{ \vec{1} \}$. In addition, since $c_{\vec{v}}(\mu)$ is a ratio, without loss of generality, we only need to consider  $\|\vec{v}\|=\vec{1}$. 
		
		Notice that if $ \vec{v} \in \operatorname{Null}(\bm{L}_{\widetilde{G}^{\mu}})$, $(\bm{L^-_{\mu}} \vec{v}, \vec{v}) = (\bm{L}_{\widetilde{H}^{\mu}}\vec{v}, \vec{v})$ which proves the result with $\rho = 1$.
		
		According to the construction of graph $\widetilde{G}^{\mu}$, $(i, j)\in E^-(\widetilde{G}^{\mu})$ only consists of negative edges that across different layers. However, $(i, j)\in E^+(\widetilde{G}^{\mu})$ includes positive edges connecting nodes both on the same level and across different levels. Due to our construction, 
		we can observe that, 
		$$ c_{\vec{v}}(0) = 0 \text{ and } c_{\vec{v}}(1) \leqslant 1.
		$$
		When $\mu=0$, $E^+(\widetilde{G}^{\mu})$ includes edges in the original graph $G$ which the weights are not multiplied by $\mu$, therefore the denominator of $c_{\vec{v}}(\mu)$ is nonzero. On the other hand, all the edges in $E^-(\widetilde{G}^{\mu})$ are scaled by $\mu$, which makes the numerator zero. Hence, $c_{\vec{v}}(0) = 0$.
		
		When $\mu=1$, both positive and negative edges that are across the layers are scaled by the same power of $\mu$, but the denominator includes more positive edges that are on the same level, therefore we have $c_{\vec{v}}(1) \leqslant 1$.
		
		We can also see that $c_{\vec{v}}(\mu)$ is continuous on $[0, 1]$ with respect to $\mu$ for any fixed $\vec{v}\perp\vec{1}$ and $\|\vec{v}\| = \vec{1}$.
		For such a fixed $\vec{v}$, use the fact that $c_{\vec{v}}(\mu)$ is right continuous at 0, for any given $0<\rho\leqslant1$, we can arrive at the conclusion that there exists a $\mu_{\vec{v}}^*(\rho)>0$, such that
		$$ c_{\vec{v}}(\mu)\leqslant\rho \text{ for any } 0<\mu<\mu_{\vec{v}}^*(\rho)
		<1.	$$
		Since the set $\{ \vec{v} \in \mathbb{R}^{\widetilde{n}} \, | \, \vec{v}\perp\vec{1} ,\: \|\vec{v}\| = \vec{1} \}$ is compact, this implies
		$$c_{\vec{v}}(\mu)\leqslant\rho \text{ for any } \vec{v}\perp\vec{1} ,\: \|\vec{v}\| = \vec{1} \text{ and }0 < \mu<\mu^*(\rho),
		$$
		where $\mu^*(\rho):=\min_{\{ \vec{v} \in \mathbb{R}^{\widetilde{n}} \, | \, \vec{v}\perp\vec{1} ,\: \|\vec{v}\| = \vec{1} \}}\mu_{\vec{v}}^*(\rho) > 0$. 
		Therefore, for any $\mu$ such that $0 < \mu<\mu^*(\rho)$, we have
		$$ (\bm{L}^-_{\mu} \vec{v}, \vec{v})\leqslant\rho \, (\bm{L}_{\widetilde{H}^{\mu}}\vec{v}, \vec{v}), \quad \forall\: \vec{v}\perp\vec{1},\: \|\vec{v}\| = \vec{1}\text{ and } 0<\rho\leqslant1.
		$$
		This completes the proof.
	\end{proof}
	From lemma~\ref{lem:lower-mu}, the lower bound of $\lambda(\bm{L}_{\widetilde{H}^{\mu}}^\dagger\bm{L}_{\widetilde{G}^{\mu}})$ follows naturally,
	\begin{lemma}\label{lem:lower}
		When $\vec{v}\perp \operatorname{Null}(\bm{L}_{\widetilde{H}^{\mu}})$, the lower bound of $\lambda(\bm{L}_{\widetilde{H}^{\mu}}^\dagger\bm{L}_{\widetilde{G}^{\mu}})$ is $1-\rho$, for any $0<\mu<\mu^*(\rho)$, where $\mu^*(\rho)$ is defined in Lemma~\ref{lem:lower-mu}.
	\end{lemma}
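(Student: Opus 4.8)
The plan is to obtain this lower bound as an immediate consequence of the quadratic-form identity already derived inside the proof of Lemma~\ref{lem:upper}, combined with the scaling estimate of Lemma~\ref{lem:lower-mu}. First I would recall from the proof of Lemma~\ref{lem:upper} the exact decomposition
\begin{equation*}
	(\bm{L}_{\widetilde{G}^{\mu}} \vec{v}, \vec{v}) = (\bm{L}_{\widetilde{H}^{\mu}}\vec{v}, \vec{v}) - (\bm{L}_{\mu}^-\vec{v}, \vec{v}),
\end{equation*}
which holds for every $\vec{v}\in\mathbb{R}^{\widetilde{n}}$ and simply splits the edge sum of $\widetilde{G}^{\mu}$ into its positive and negative parts.

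Next I would fix $0<\rho\leqslant 1$ and take any $\mu$ with $0<\mu<\mu^*(\rho)$, where $\mu^*(\rho)$ is the threshold produced by Lemma~\ref{lem:lower-mu}. For an arbitrary test vector $\vec{v}\perp\operatorname{Null}(\bm{L}_{\widetilde{H}^{\mu}})$, Lemma~\ref{lem:lower-mu} gives $(\bm{L}^-_{\mu} \vec{v}, \vec{v})\leqslant\rho\,(\bm{L}_{\widetilde{H}^{\mu}}\vec{v}, \vec{v})$. Substituting this bound into the identity above yields
\begin{equation*}
	(\bm{L}_{\widetilde{G}^{\mu}} \vec{v}, \vec{v}) \geqslant (\bm{L}_{\widetilde{H}^{\mu}}\vec{v}, \vec{v}) - \rho\,(\bm{L}_{\widetilde{H}^{\mu}}\vec{v}, \vec{v}) = (1-\rho)\,(\bm{L}_{\widetilde{H}^{\mu}}\vec{v}, \vec{v}).
\end{equation*}
Because $\vec{v}\perp\operatorname{Null}(\bm{L}_{\widetilde{H}^{\mu}})=\operatorname{span}\{\vec{1}\}$ forces $(\bm{L}_{\widetilde{H}^{\mu}}\vec{v}, \vec{v})>0$, I can divide through to obtain the Rayleigh-quotient inequality $(\bm{L}_{\widetilde{G}^{\mu}} \vec{v}, \vec{v})/(\bm{L}_{\widetilde{H}^{\mu}}\vec{v}, \vec{v})\geqslant 1-\rho$ on the whole orthogonal complement of the nullspace.

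Finally I would invoke the variational (min--max) characterization of the generalized eigenvalue problem~\eqref{eqn:general-eigenvalue}: restricted to $\vec{v}\perp\operatorname{Null}(\bm{L}_{\widetilde{H}^{\mu}})$, the smallest eigenvalue of $\bm{L}_{\widetilde{H}^{\mu}}^\dagger\bm{L}_{\widetilde{G}^{\mu}}$ equals the infimum of that Rayleigh quotient, so the uniform bound $1-\rho$ is precisely the desired lower bound for $\lambda(\bm{L}_{\widetilde{H}^{\mu}}^\dagger\bm{L}_{\widetilde{G}^{\mu}})$. There is essentially no hard step remaining here; all the analytic difficulty — establishing the continuity and compactness argument needed to choose a single threshold $\mu^*(\rho)$ uniformly in $\vec{v}$ — was already absorbed into Lemma~\ref{lem:lower-mu}. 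The only point that warrants a careful sentence is the identification of the pseudoinverse Rayleigh quotient on $\operatorname{Null}(\bm{L}_{\widetilde{H}^{\mu}})^\perp$ with the generalized eigenvalues, ensuring that the zero and unbounded cases discussed earlier (Cases~1 and~2) are correctly excluded from this bound.
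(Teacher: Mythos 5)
Your proposal is correct and follows essentially the same route as the paper: the positive/negative edge decomposition of $(\bm{L}_{\widetilde{G}^{\mu}}\vec{v},\vec{v})$ from Lemma~\ref{lem:upper}, the substitution of the bound from Lemma~\ref{lem:lower-mu}, and the resulting inequality $(\bm{L}_{\widetilde{G}^{\mu}}\vec{v},\vec{v})\geqslant(1-\rho)(\bm{L}_{\widetilde{H}^{\mu}}\vec{v},\vec{v})$. Your closing remarks on dividing by the positive quadratic form and identifying the Rayleigh quotient with the generalized eigenvalues merely make explicit a step the paper leaves implicit.
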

	\begin{proof}
		As proved in Lemma~\ref{lem:lower-mu}, $(\bm{L}^-_{\mu} \vec{v}, \vec{v})\leqslant\rho \ (\bm{L}_{\widetilde{H}^{\mu}}\vec{v}, \vec{v}), \forall\: \vec{v}\perp\vec{1},\: \|\vec{v}\| = \vec{1}\text{ and } 0<\rho\leqslant1.$ Therefore, we have
		\begin{align*}
			(\bm{L}_{\widetilde{G}^{\mu}} \vec{v}, \vec{v})=(\bm{L}_{\widetilde{H}^{\mu}}\vec{v}, \vec{v}) - (\bm{L}_{\mu}^-\vec{v}, \vec{v})
			\geqslant (\bm{L}_{\widetilde{H}^{\mu}}\vec{v}, \vec{v}) - \rho(\bm{L}_{\widetilde{H}^{\mu}}\vec{v}, \vec{v})
			=(1-\rho) (\bm{L}_{\widetilde{H}^{\mu}}\vec{v}, \vec{v}).
		\end{align*}
		which completes the proof. 
	\end{proof}

Together with the result for upper bound, we have the following theorem which shows the spectral equivalence between the expanded graph $\widetilde{G}^{\mu}$ and its positive weighted subgraph $\widetilde{H}^{\mu}$.
\begin{theorem}\label{thm:MSP_cond}
	Given a simply connected graph $G$, let $\widetilde{G}^{\mu}$ be the expanded graph defined in~\eqref{def:L_ell} and~$\widetilde{H}^{\mu}$ be the subgraph of~$\widetilde{G}^{\mu}$ consisting only positive edges. Then for a given $0< \rho \leq 1$, there exists a constant $\mu^*(\rho)$ defined in Lemma~\ref{lem:lower-mu} such that, for any $0<\mu<\mu^*(\rho)$, when $\vec{v}\perp\operatorname{Null}(\bm{L}_{\widetilde{H}^{\mu}})$, we have
	\begin{equation*}
		(1-\rho)  (\bm{L}_{\widetilde{H}^{\mu}}\vec{v}, \vec{v}) \leq (\bm{L}_{\widetilde{G}^{\mu}} \vec{v}, \vec{v}) \leq (\bm{L}_{\widetilde{H}^{\mu}}\vec{v}, \vec{v}).
	\end{equation*}
	Moreover, when $\vec{v}\perp\operatorname{Null}(\bm{L}_{\widetilde{G}^{\mu}})$, then $0 < \rho < 1$ and the corresponding finite condition number is $\kappa(\bm{L}_{\widetilde{H}^{\mu}}^{\dagger}\bm{L}_{\widetilde{G}^{\mu}})\leqslant \frac{1}{1-\rho}$.
\end{theorem}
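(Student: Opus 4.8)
The plan is to assemble the theorem directly from the three preceding lemmas, since almost all of the analytic work has already been done. First I would fix $0<\rho\leqslant 1$, choose $\mu^*(\rho)$ exactly as in Lemma~\ref{lem:lower-mu}, and take any $0<\mu<\mu^*(\rho)$. For the two-sided inequality, restricted to $\vec{v}\perp\operatorname{Null}(\bm{L}_{\widetilde{H}^{\mu}})$, the right-hand bound $(\bm{L}_{\widetilde{G}^{\mu}}\vec{v},\vec{v})\leqslant(\bm{L}_{\widetilde{H}^{\mu}}\vec{v},\vec{v})$ is precisely Lemma~\ref{lem:upper} (valid for every $\mu>0$), and the left-hand bound $(1-\rho)(\bm{L}_{\widetilde{H}^{\mu}}\vec{v},\vec{v})\leqslant(\bm{L}_{\widetilde{G}^{\mu}}\vec{v},\vec{v})$ is exactly Lemma~\ref{lem:lower}. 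Chaining the two yields the displayed sandwich, so this half of the statement needs no fresh computation.

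For the condition-number bound I would pass to the nonzero part of the generalized eigenvalue problem~\eqref{eqn:general-eigenvalue}, i.e.\ restrict to $\vec{v}\perp\operatorname{Null}(\bm{L}_{\widetilde{G}^{\mu}})$, which is Case 3 of the preceding discussion. The structural fact I would invoke is the nullspace inclusion $\operatorname{Null}(\bm{L}_{\widetilde{H}^{\mu}})=\operatorname{span}\{\vec{1}\}\subseteq\operatorname{Null}(\bm{L}_{\widetilde{G}^{\mu}})$, which follows from Lemma~\ref{lem:dim_G_tilde} together with $\vec{1}\in\operatorname{Null}(\bm{L}_{\widetilde{G}^{\mu}})$. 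Because of this inclusion, any $\vec{v}\perp\operatorname{Null}(\bm{L}_{\widetilde{G}^{\mu}})$ is automatically orthogonal to $\operatorname{Null}(\bm{L}_{\widetilde{H}^{\mu}})$, so both bounds above apply simultaneously, and the Rayleigh quotient $(\bm{L}_{\widetilde{G}^{\mu}}\vec{v},\vec{v})/(\bm{L}_{\widetilde{H}^{\mu}}\vec{v},\vec{v})$ lies in $[1-\rho,\,1]$ on this subspace. By the min-max characterization its extreme values are precisely the smallest and largest nonzero eigenvalues $\lambda_{\min}$ and $\lambda_{\max}$ of $\bm{L}_{\widetilde{H}^{\mu}}^{\dagger}\bm{L}_{\widetilde{G}^{\mu}}$.

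The one point that needs a short argument, and the part I expect to be the main obstacle, is showing $\rho<1$ strictly, which is what forces the condition number to be finite. Here I would observe that $\vec{v}\perp\operatorname{Null}(\bm{L}_{\widetilde{G}^{\mu}})$ gives $(\bm{L}_{\widetilde{G}^{\mu}}\vec{v},\vec{v})>0$, and the identity $(\bm{L}_{\widetilde{G}^{\mu}}\vec{v},\vec{v})=(\bm{L}_{\widetilde{H}^{\mu}}\vec{v},\vec{v})-(\bm{L}_{\mu}^-\vec{v},\vec{v})$ from the proof of Lemma~\ref{lem:upper} then yields $(\bm{L}_{\mu}^-\vec{v},\vec{v})<(\bm{L}_{\widetilde{H}^{\mu}}\vec{v},\vec{v})$, i.e.\ the ratio $c_{\vec{v}}(\mu)$ of Lemma~\ref{lem:lower-mu} is strictly below $1$ on the compact set of admissible $\vec{v}$; hence the achievable $\rho$ is strictly less than $1$ and the lower eigenvalue bound $1-\rho$ is positive. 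Combining $\lambda_{\max}\leqslant 1$ with $\lambda_{\min}\geqslant 1-\rho>0$ then gives $\kappa(\bm{L}_{\widetilde{H}^{\mu}}^{\dagger}\bm{L}_{\widetilde{G}^{\mu}})=\lambda_{\max}/\lambda_{\min}\leqslant 1/(1-\rho)$, which completes the proof.
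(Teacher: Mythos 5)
Your proposal is correct and follows essentially the same route as the paper's proof: chain Lemma~\ref{lem:upper} and Lemma~\ref{lem:lower} for the two-sided bound, then restrict to $\vec{v}\perp\operatorname{Null}(\bm{L}_{\widetilde{G}^{\mu}})$ to force $\rho<1$ and divide to obtain $\kappa(\bm{L}_{\widetilde{H}^{\mu}}^{\dagger}\bm{L}_{\widetilde{G}^{\mu}})\leqslant 1/(1-\rho)$. Your added justifications --- the nullspace inclusion $\operatorname{Null}(\bm{L}_{\widetilde{H}^{\mu}})\subseteq\operatorname{Null}(\bm{L}_{\widetilde{G}^{\mu}})$ making both lemmas applicable simultaneously, and the compactness argument for the strict inequality $\rho<1$ --- are more explicit than the paper's terse treatment but do not change the argument.
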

\begin{proof}
	By the proofs of Lemmas~\ref{lem:upper} and~\ref{lem:lower}, we have $(1-\rho)\cdot (\bm{L}_{\widetilde{H}^{\mu}}\vec{v}, \vec{v}) \leqslant (\bm{L}_{\widetilde{G}^{\mu}}\vec{v}, \vec{v}) \leqslant (\bm{L}_{\widetilde{H}^{\mu}}\vec{v}, \vec{v})$ when $\vec{v}\perp\operatorname{Null}(\bm{L}_{\widetilde{H}^{\mu}})$. If we restrict $\vec{v}\perp\operatorname{Null}(\bm{L}_{\widetilde{G}^{\mu}})$, we have $0<\rho<1$. We can then divide the lower bound and get the condition number  $\kappa(\bm{L}_{\widetilde{H}^{\mu}}^\dagger\bm{L}_{\widetilde{G}^{\mu}}) \leqslant \dfrac{1}{1-\rho}, 0<\rho<1.$ 
\end{proof}
\begin{remark}
	According to Thoerem~\ref{thm:MSP_cond}, in practice, we can always choose a proper $\mu$ to guarantee a small condition number $1/(1-\rho)$. However, we want to point out that our results is an existence result, i.e., for any given $0 < \rho < 1$, we can choose small enough $\mu$ in order to guarantee the spectral equivalence between the graphs $\widetilde{G}^{\mu}$ and $\widetilde{H}^{\mu}$.  How to explicitly compute $\mu$ for a given $\rho$ is still open and a subject of our ongoing work.
\end{remark}

To get a more in-depth understanding of the selections of scaling parameter $\mu$ in practice, we demonstrate it on different graphs with various sizes. In all cases, we use MWM as the aggregation method (as suggested in Section~\ref{sec:Pre}). We construct the expanded graph to the maximum level possible (with only two nodes on the top level) and record $\kappa((\bm{L}_{\widetilde{H}^{\mu}}^\dagger\bm{L}_{\widetilde{G}^{\mu}})$ (the ratio between the largest  eigenvalue $\lambda_{\max}(\bm{L}_{\widetilde{H}^{\mu}}^\dagger\bm{L}_{\widetilde{G}^{\mu}})$ and the smallest nonzero eigenvalue $\lambda_{\min}(\bm{L}_{\widetilde{H}^{\mu}}^\dagger\bm{L}_{\widetilde{G}^{\mu}})$) 
with different graph sizes $n$ and proper scaling factors $\mu$.

We calculate the condition numbers on two types of structured graphs, 2D regular grids and ring graphs. Examples of a 2D regular grid with $16$ nodes and a ring graph with $10$ nodes and average degree of 4 are shown in Figure~\ref{fig:2D-grids} and Figure~\ref{fig:ring1} respectively. 
\begin{figure}[h!]
	\begin{center}
		\includegraphics[width=5cm]{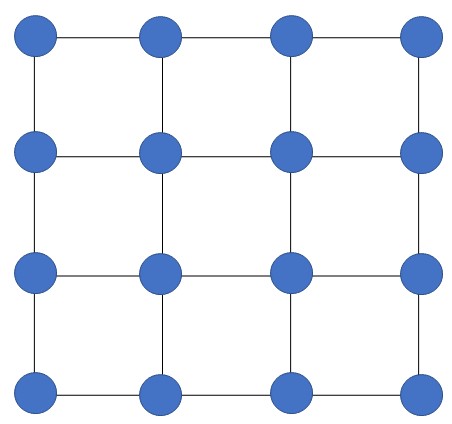}
	\end{center}
	\caption{A 2D regular grid with $|V| = 16$}\label{fig:2D-grids}
\end{figure}

\begin{figure}[h!]
	\begin{center}
		\includegraphics[width=5cm]{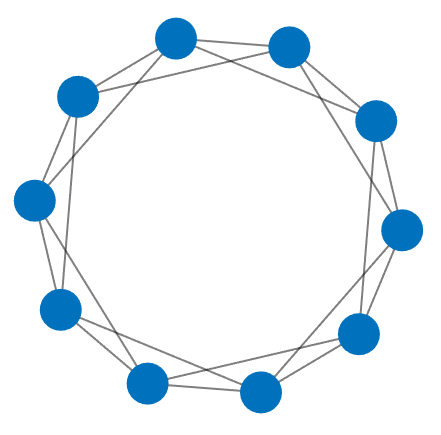}
	\end{center}
	\caption{A ring graph with $|V| = 10$, and average degree $=4$}\label{fig:ring1}
\end{figure}

We first analyze the condition numbers when picking a relatively large $\mu=0.8$. In this case, $\mu$ is a constant that does not vary with the graph size $n$:
\begin{table}[h]
	\begin{center}
		{
			\caption{Analysis of condition number on 2D regular grids and ring graphs with average degree $=4$, $\mu=0.8$}\label{tab:mu0.8}
			\begin{tabular}{||c|c|c||c|c||}
				\hline\hline
				\multicolumn{3}{||c||}{Graph Basics}& \multicolumn{2}{||c||}{$\kappa(\bm{L}_{\widetilde{H}^{\mu}}^\dagger\bm{L}_{\widetilde{G}^{\mu}})$}\\ \hline
				Size of $\bm{L_1}\in \mathbb{R}^{n\times n}$ & Level & Size of $\bm{L}_{\widetilde{G}^{\mu}}\in \mathbb{R}^{\tilde{n}\times \tilde{n}}$ & 2D Grids & Ring Graphs \\ \hline
				16*16    & 4    & 30*30    & 4.5713 &     3.5001       \\ \hline
				64*64     & 6   & 126*126      &14.4171 &     16.9643           \\ \hline
				256*256     & 8   & 510*510      & 61.8819 &  83.5291       \\ \hline
				1024*1024  & 10      & 2046*2046       &204.5351 & 348.3507            \\ \hline \hline
			\end{tabular}
		}
	\end{center}
\end{table}

				
From Table~\ref{tab:mu0.8}, we can observe that with large $\mu$, the condition numbers for both types of graphs grow with the graph size and are not bounded. This corresponds to our conclusion in Theorem~\ref{thm:MSP_cond}. Therefore, we experiment with smaller $\mu$'s in the following tests:

\begin{table}[h!]
	\begin{center}
		{
			\caption{Analysis of condition number on 2D regular grids and ring graphs with average degree $=4$, $\mu=\dfrac{1}{\sqrt{n}}$}\label{tab:mu}
			\begin{tabular}{||c|c|c||c|c||}
				\hline\hline
				\multicolumn{3}{||c||}{Graph Basics}& \multicolumn{2}{||c||}{$\kappa(\bm{L}_{\widetilde{H}^{\mu}}^\dagger\bm{L}_{\widetilde{G}^{\mu}})$}\\ \hline
				Size of $\bm{L_1}\in \mathbb{R}^{n\times n}$ & Level & Size of $\bm{L}_{\widetilde{G}^{\mu}}\in \mathbb{R}^{\tilde{n}\times \tilde{n}}$ & 2D Grids & Ring Graphs \\ \hline
				16*16    & 4    & 30*30    & 1.5680 &     2.6021       \\ \hline
				64*64     & 6   & 126*126      &1.4128 &     3.1655          \\ \hline
				256*256     & 8   & 510*510      & 1.1959 &  2.1026        \\ \hline
				1024*1024  & 10      & 2046*2046       &1.0926 & 1.6033            \\ \hline \hline
			\end{tabular}
		}
	\end{center}
\end{table}

				
We can observe from Table~\ref{tab:mu} that setting $\mu=\dfrac{1}{\sqrt{n}}$ on the regular grids and ring graphs gives bounded condition number $\kappa(\bm{L}_{\widetilde{H}^{\mu}}^\dagger\bm{L}_{\widetilde{G}^{\mu}})$ even with increasing graph size. It verifies Theorem~\ref{thm:MSP_cond} numerically that there exists a small enough $\mu$ such that $\kappa(\bm{L}_{\widetilde{H}^{\mu}}^\dagger\bm{L}_{\widetilde{G}^{\mu}})$ is bounded from above. Moreover, with the chosen $\mu$, it can be observed that as graph sizes grow larger, the condition numbers decrease slightly for both graph types.

\subsection{Finding Spectral Sparsifiers of $\widetilde{H}^{\mu}$}\label{sec:MSP}
Now that PEGP $\widetilde{H}^{\mu}$ has small diameter from its multilevel structure and positive weights only, it makes a good candidate as a preconditioner for the expanded graph $\widetilde{G}^{\mu}$. Moreover, it is easy to apply any off-the-shelf algorithms to find spanning trees and sparsifiers on PEGP, which we leads us to finding spectral sparsifiers that further improve the performance. To explain the construction, we first review the idea of stretch and its relation to condition number. In~\cite{alon1995graph}, the stretch of an edge $(i,j)$ in $\widetilde{H}^{\mu}$ is defined as:
\begin{equation}\label{def:stretch-edge}
\text{stretch}_{\widetilde{T}^\mu}(i,j)=\omega_{(i,j)}\left(\sum_{(u,v)\in p(i, j)}1/\omega_{(u,v)}\right),
\end{equation}
where $\omega_{(i,j)}$ is the edge weight of edge $(i,j)$ and $p(i,j)$ is the path that connects nodes $i$ and $j$ in the tree $\widetilde{T}^\mu$. The stretch of the entire positive subgraph $\widetilde{H}^{\mu}$ is then:
\begin{equation}\label{def:stretch-graph}
\text{stretch}_{\widetilde{T}^\mu}(\widetilde{H}^{\mu})=\sum_{(i,j)\in E^+ (\widetilde{G}^{\mu})}\text{stretch}_{\widetilde{T}^\mu}(i,j).
\end{equation}
It is proved in Section 2.2 of~\cite{phdthesis} that $\kappa(\bm{L}_{\widetilde{T}^\mu}^{\dagger}\bm{L}_{\widetilde{H}^{\mu}})=\|\bm{K}_{\widetilde{T}^\mu}\|_2^2\leqslant\|\bm{K}_{\widetilde{T}^\mu}\|_F^2=\text{stretch}_{\widetilde{T}^\mu}(\widetilde{H}^{\mu})$. Here, $\bm{K}_{\widetilde{T}^\mu}=\bm{\widetilde{B}}_{\widetilde{T}^\mu}^\dagger\cdot\bm{\widetilde{B}}_{\widetilde{G}^\mu}$, and $\bm{\widetilde{B}}_{\widetilde{T}^\mu}$ and $\bm{\widetilde{B}}_{\widetilde{G}^\mu}$ are the weighted incidence matrices of tree $\widetilde{T}^\mu$ and expanded graph $\widetilde{G}^\mu$ respectively.  Combine our finding in Theorem~\ref{thm:MSP_cond} with the result in~\cite{phdthesis}, we can get $(1 -\rho)(\bm{L}_{\widetilde{T}^\mu}\: \vec{v}, \vec{v}) \leqslant (1 -\rho)(\bm{L}_{\widetilde{H}^{\mu}}\: \vec{v}, \vec{v})\leqslant (\bm{L}_{\widetilde{G}^{\mu}}\: \vec{v}, \vec{v})\leqslant (\bm{L}_{\widetilde{H}^{\mu}}\: \vec{v}, \vec{v}) \leqslant \text{stretch}_{\widetilde{T}^\mu}(\widetilde{H}^{\mu})(\bm{L}_{\widetilde{T}^\mu}\: \vec{v}, \vec{v})$. Therefore, the condition number $\kappa(\bm{L}_{\widetilde{T}^\mu}^{\dagger}\bm{L}_{\widetilde{G}^{\mu}}) = \dfrac{\text{stretch}_{\widetilde{T}^\mu}(\widetilde{H}^{\mu})}{1-\rho}$. 

In addition to using spanning tree, researchers have found that sparsifier created by adding back spectrally critical off-tree edges to low-stretch spanning tree offers a practically efficient, nearly-linear time preconditioner for large-scale, real-world
graph problems~\cite{abraham2012using,feng2016spectral}.

Although there are existing spectral sparsifiers that can be applied to the PEGP $\widetilde{H}^{\mu}$ directly, to further demonstrate that the benefit of constructing $\tilde{H}^{\mu}$, we propose a tree-structured multilevel sparsifier preconditioner for $\widetilde{H}^{\mu}$ which can be constructed in a straightforward manner. Thanks to the fact that the positive subgraph $\widetilde{H}^{\mu}$ of the expanded graph $\widetilde{G}^{\mu}$ has small diameter from its multilevel structure and positive weights only, it is easy for us to define a low-stretch spanning sparsifier. Specifically, we extract only the edges on the top coarse level, edges between the levels, and edges inside of the aggregations of the original graphs from the positive subgraph. For example, the positive subgraph in~\ref{fig:spanningsubgraph} can be further extracted to:

\begin{figure}[h!]
	\begin{center}
		\includegraphics[width=10cm]{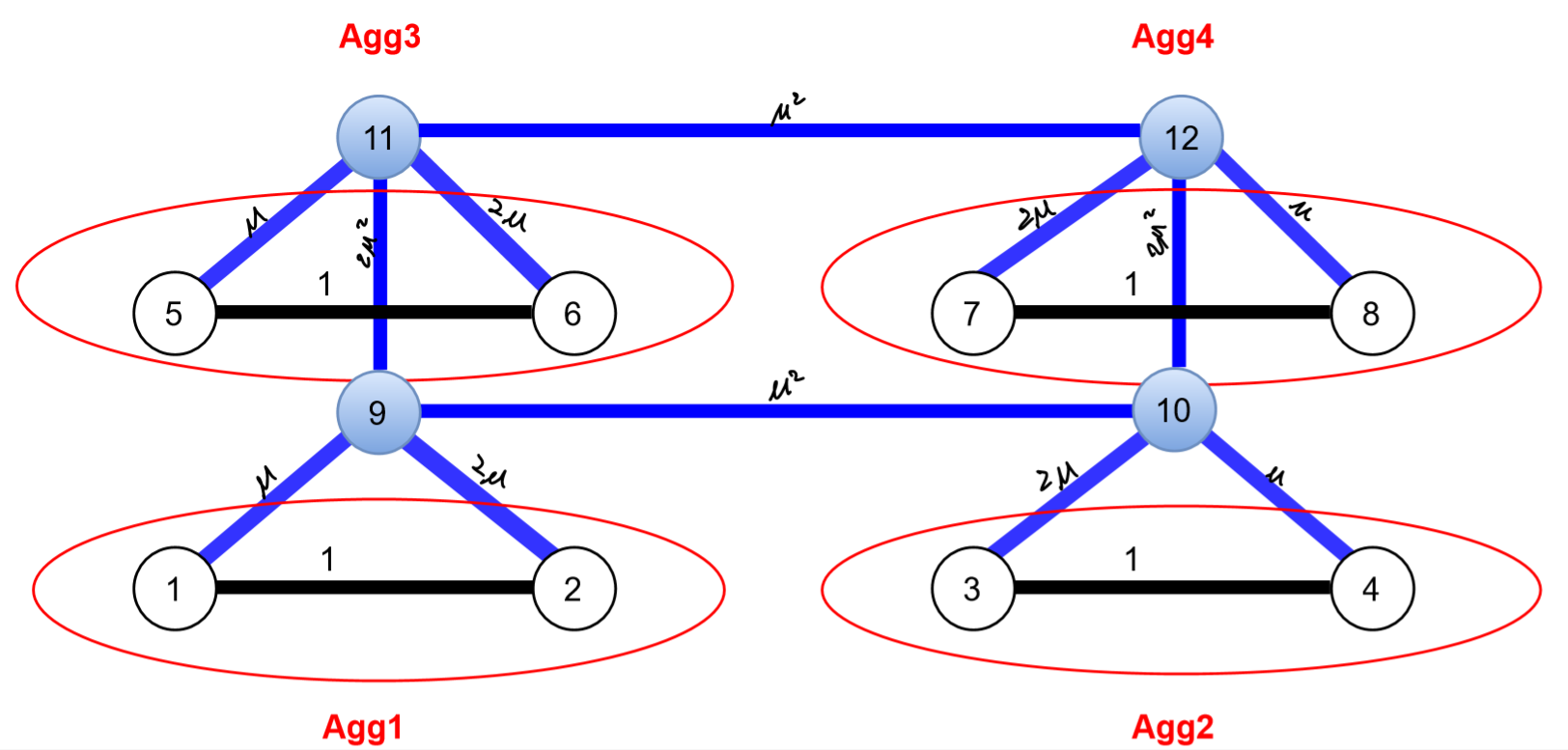}
	\end{center}
	\caption{2-level spanning tree-structured sparsifier MSP of PEGP $\widetilde{H}^{\mu}$ with aggregations defined in Equation~\ref{def:P_12}}\label{fig:LT}
\end{figure}

Here, the MSP we build here is a sparsifier rather than a spanning tree. When computing $\text{stretch}_{\widetilde{T}^\mu}(\widetilde{H}^{\mu})$, $\text{dist}_{\widetilde{T}^\mu}(i,j)$ is not unique. We choose the path with the lightest edge weight sum as $\text{dist}_{\widetilde{T}^\mu}(i,j)$ in this case. Since we use MWM coarsening scheme, most aggregations are matchings of size 2 . MSP's tree-like structure guarantees that the Gaussian elimination procedure introduces no fill-ins to the Schur complement when eliminating each aggregation less than four nodes from the base level.  

From the standpoint of achieving the best theoretical bound for $\kappa(\bm{L}_{\widetilde{T}^\mu}^{\dagger}\bm{L}_{\widetilde{G}^{\mu}})$, we need to pick large $\mu$ to get small $\text{stretch}_{\widetilde{T}^\mu}(\widetilde{H}^{\mu})$ but small-scale $\mu$ to control $\dfrac{1}{1-\rho}$. In practice, we found that $\mu=1/\sqrt{n}$ leads to steady iteration steps and efficient solving time for our test graphs in~\ref{sec:num}.

\section{Numerical Results}\label{sec:num}
In this section, we verify the spectral approximating property of the PEGP $\widetilde{H}^{\mu}$ by using it as a preconditioner for solving the expanded graph $\widetilde{G}^{\mu}$, which is equivalent to the original graph $G$. Moreover, we also demonstrate the performance of MSP to show its practical applications. We conduct numerical experiments on different types of graph Laplacians; some are related to discrete PDEs, and the rest are derived from real-world networks. For all the tests, we solve the expanded graph Laplacian system in~\eqref{eqn:expandedsys}, which is equivalent to the original system~\eqref{eqn:originalsys}. We pair the model problems with a low-frequency right-hand-side $\bm{\tilde{b}}\in \mathbb{R}^{\tilde{n}}=(\{\bm{P}(\mu)\}^{\ell})^T \bm{b}, \text{where } \bm{b}=[1,1,\dots,1-n]^T$. We compare the performance of the proposed preconditioners to the Combinatorial Multigrid (CMG) preconditioner ~\cite{KOUTIS}. CMG borrows the structure and operations of multigrid algorithms, but its setup phase is based on support theory. It has been noted by many authors for its robustness in practice for various real-world problems~\cite{livne2012lean,krishnan2011multigrid,garyfallou2018combinatorial}. Since CMG does not work with graphs with negative edges, we use it to solve the original system~\eqref{eqn:originalsys}.

We choose MWM coarsening scheme to build the multilevel structure described in Section~\ref{sec:agg} and add  the leftover nodes to the aggregate with the heaviest incident edge. Therefore, there are no isolated nodes on any level. For all the experiments below, we use flexible generalized minimum residual method (FGMRES)~\cite{saad1986gmres} as the solver to solve \eqref{eqn:expandedsys} to a relatively small tolerance $1e-8$ for the scalability tests in Sections~\ref{sec:regular-grid} and~\ref{sec:ring-graph} and more practical choice $1e-6$ for real-world graphs in Section~\ref{sec:realworld}.
  
Numerical experiments are conducted using a $1.1$GHz Intel
i7-10710U CPU with 16 GB of RAM.  The software used for testing PEGP and MSP is written by the authors and implemented in
Matlab code. The package used to test for CMG method can be found in~\cite{cmg-solver}, which is a C code integrated in Matlab interface. 
We report the following results in each table.  The
``steps" column reports the number of iterations needed for the
residual to reach certain tolerance, ``level" represents the total number of levels for the MSP, $\mu$ is a scalar of choice from 0 to 1, and we report ``time" as the total
CPU times in seconds.  We set the maximum number of iterations to 1000 and denote any cases where the solver takes more than 1000 iterations as ``--".

\subsection{On Regular Grids}\label{sec:regular-grid}
We first tested the performance of the preconditioners on unweighted graph Laplacians of 2D regular uniform grids in Table~\ref{tab:grids}. This relates to solving a Poisson equation with Neumann boundary condition on a 2D square.


\begin{table}[h]
	\begin{center}
		{\tiny
			\caption{Performance on Regular Grids $\epsilon=10^{-8}$}\label{tab:grids}
			\begin{tabular}{|c||c|c||c|c||c|c||c|c||c|c|c|} \hline \hline
				& \multicolumn{4}{|c||}{level = 2, $\mu=1/\sqrt{n}$} & \multicolumn{4}{|c|}{level = max, $\mu=1/\sqrt{n}$} &\multicolumn{3}{|c|}{}\\ \hline
				& \multicolumn{2}{|c||}{PEGP} &  \multicolumn{2}{|c||}{MSP}&\multicolumn{2}{|c||}{PEGP}& \multicolumn{2}{|c|}{MSP} & \multicolumn{3}{|c|}{CMG}\\ \hline
				$n$ &  steps  & time& steps  & time &  steps  & time & steps  & time & level & time & steps\\ \hline \hline
				$2^8$ & 12 & 0.0094 & 48 & 0.0208 & 7 & 0.0092 &  58 & 0.0133 & 4 & 14  & 0.0393\\
				$2^{10}$ & 10  & 0.0333 & 51   & 0.0618 & 6 & 0.0527 &  93 &  0.0793& 5 & 15   & 0.0877 \\
				$2^{12}$ & 9  & 0.1377 & 51  & 0.2748 & 5 & 0.1933	&  151  & 0.2281 & 6  & 16 & 0.3923\\
				$2^{14}$ &  8  & 0.5092 & 55  & 1.0923 & 4 & 0.7758 &  248   & 1.2778 	 & 7 & 16  & 1.0066 \\
				\hline \hline
			\end{tabular}
		}
	\end{center}
\end{table}

There are two main observations from the numerical results: (1) As shown in Theorem~\ref{thm:MSP_cond}, with a small enough $\mu$, $\kappa(\bm{L}^{\dagger}_{\widetilde{H}^{\mu}}\bm{L}_{\widetilde{G}^{\mu}})$ can be controlled, and this is confirmed by the stable iteration counts even when the amount of level is fixed to 2. When extending to the maximal level, the PEGP still has stable iteration counts, contributing to the short CPU time needed for computation. (2) It is easy to construct MSP from PEGP due to its small diameter. MSP has low stretch and tree structure, which makes each solving step faster to run. When level = 2, the CPU time scales linearly with the problem size. When level = max, MSP's computation time scales in $O(n\log n)$.  In addition, the CPU time of our proposed preconditioners are faster or comparable with CMG's performance for this model testing problem.

\subsection{On Random Watts-Strogatz Graphs} \label{sec:ring-graph}
For the second example, we use Watts Strogatz~\cite{watts1998collective} model and set the rewiring probability $ \beta = 1/\sqrt{n}$ and the mean node degree to be $4$ to produce ring-like graphs as in~\ref{fig:ring}.  The condition numbers of the graph Laplacians of the ring graphs also grow rapidly when the size of the graphs increases.

\begin{figure}[h!]
	\begin{center}
		\includegraphics[width=0.6\textwidth]{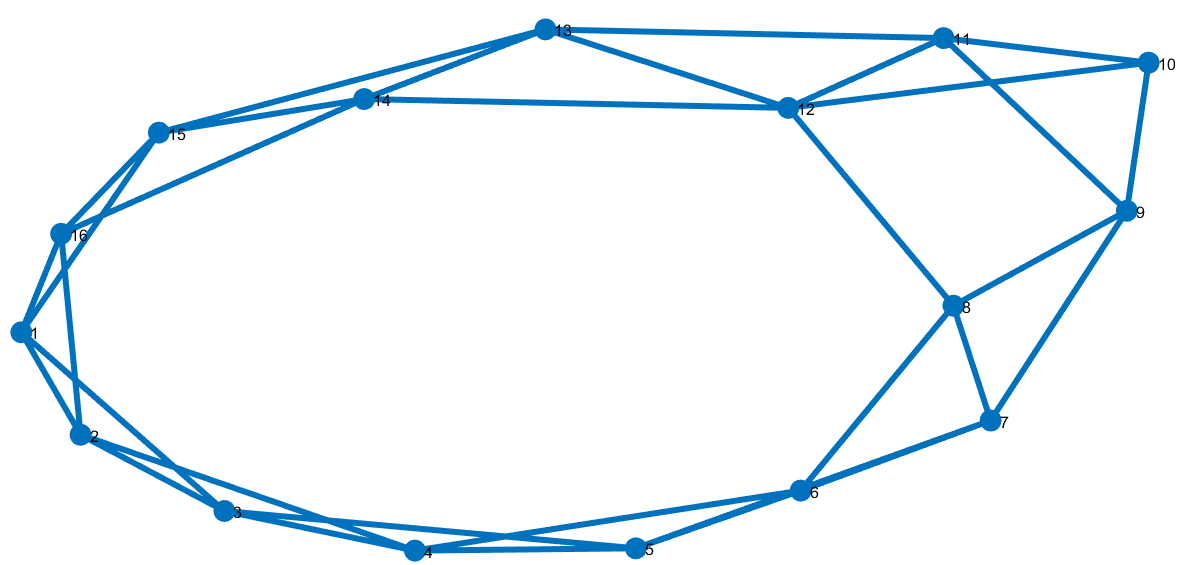}
		\caption{Example of a Watts-Strogatz graph with $n=16$, mean node degree$=4$, and rewiring probability $ \beta = 1/4$}\label{fig:ring}
	\end{center}
\end{figure}

\begin{table}[h]
	\begin{center}
		{\tiny
			\caption{Performance on Solving Watts-Strogatz Graphs $\epsilon=10^{-8}$, rewiring probability$ = 1/\sqrt{n}$}\label{tab:rings}
			\begin{tabular}{|c||c|c||c|c||c|c||c|c||c|c|c|} \hline \hline
				& \multicolumn{4}{|c||}{level = 2, $\mu=1/\sqrt{n}$}&\multicolumn{4}{|c||}{level = max, $\mu=1/\sqrt{n}$}& \multicolumn{3}{|c|}{}\\ \hline
				& \multicolumn{2}{|c||}{PEGP} &  \multicolumn{2}{|c||}{MSP}&\multicolumn{2}{|c||}{PEGP}& \multicolumn{2}{|c||}{MSP} &  \multicolumn{3}{|c|}{CMG}\\ \hline
				$n$ &  steps  & time& steps  & time &  steps  & time & steps  & time & level &  steps  & time\\ \hline \hline
				$2^8$    & 11  & 0.0076 & 42  & 0.0180   &  6  & 0.0065 &  89 & 0.0073 &4 & 22  & 0.0124\\
				$2^{10}$ & 9  & 0.0197 & 51  &  0.0557  & 5   & 0.0253 &  171 &  0.0335 & 5 & 30   & 0.0243\\
				$2^{12}$ & 9  & 0.0619 & 57  & 0.1814   & 5   & 0.0960 & 344 & 0.1196 &  6  & 46 & 0.1515\\
				$2^{14}$ & 7  & 0.2402 & 62  & 0.6645   & 4   & 0.3097 & 704 & 0.9943 & 7 & 37  & 2.2038	\\
				\hline \hline
			\end{tabular}
		}
	\end{center}
\end{table}


Similar observations can be made from this set of tests. In addition to the analysis in Table~\ref{tab:mu}, the stable (even slightly decreasing) iteration count for $\widetilde{H}^{\mu}$ numerically confirms Theorem~\ref{thm:MSP_cond} that PEGP provides a spectral equivalent positively weighted expanded graph when $\mu$ is small. MSP again has an increasing iteration count. However, each step takes a much faster time to solve than PEGP and CMG. In fact, in all cases, the CPU time needed for PEGP and MSP outperform the CMG preconditioner. As the ring graph gets larger, the CMG method's CPU time and convergence steps tend to increase more drastically than the proposed preconditioners. 

\subsection{On Real-world Graphs}\label{sec:realworld}
In this section, we perform tests on real-world networks from the Stanford large network datasets collection~\cite{snapnets} and the University of Florida sparse matrix collection~\cite{davis2011university}.

We pre-processed the graphs as follows. The largest connected component of each graph is extracted, any self-loops from the extracted component are discarded, and edge weights of the component are modified to be their absolute values to satisfy the requirements of the CMG algorithm. We also made the largest connected components undirected if they were originally directed. In Tables~\ref{tab:snapnets} and~\ref{tab:UFnets}, the basic information of pre-processed graphs collected from the Stanford large network datasets collection and University of Florida sparse matrix collection are presented.

\begin{table}[htp]
	\caption{Largest connected components of the networks from Stanford large network datasets collection}\label{tab:snapnets}
	\begin{center}
		{\small
			\begin{tabular}{|c||c|c|l|} 
				\hline \hline
				&   nodes   &   edges &   Description  \\ \hline \hline
				ca-GrQc         & 4,158   &  13,422 &  Collaboration network of Arxiv General Relativity\\
				ego-Facebook         & 4,039    &  88,234 &  Social circles from Facebook (anonymized)\\
				feather-lastfm-social     &  7,624   &  27,806 &  Social network of LastFM users from Asia \\ 		
				p2p-Gnutella04     & 10,876	 & 39,994  & Gnutella peer to peer network from August 4 2002 \\
				\hline \hline
			\end{tabular}
		}
	\end{center}
\end{table}%

\begin{table}[htp]
	\caption{Largest connected components of the networks from University of Florida sparse matrix collection}\label{tab:UFnets}
	\begin{center}
		{\footnotesize
			\begin{tabular}{|c||c|c|l|} 
				\hline \hline
				&   nodes   &   edges &   Description  \\ \hline \hline
				airfoil\_2d               & 14,214       & 259,688         & Unstructured 2D mesh (airfoil)\\
				Pres\_Poisson         & 14,822     & 715,804         & A computational fluid dynamics problem from ACUSIM, Inc\\
				Oberwolfach/gyro\_k      & 17,361       & 1,021,159         & Oberwolfach model reduction benchmark collection\\
				tsyl201                  & 20,685       & 2,454,957         & Matrix Representation of TSYL201, part of condeep cylinder\\
				\hline \hline
			\end{tabular}
		}
	\end{center}
\end{table}%

\begin{table}[h]
	\begin{center}
		{\small
			\caption{Performance on Solving Real World Graphs $\epsilon=10^{-6}$, level = max} \label{tab:real-results}
			\begin{tabular}{|c|c||c|c||c|c||c|c|c|} \hline \hline
				&  & \multicolumn{2}{|c||}{PEGP} &  \multicolumn{2}{|c||}{MSP} &\multicolumn{3}{|c|}{CMG} \\ \hline
				Networks & $\mu$ & steps  & time & steps  & time & level & steps & time  \\ \hline \hline
				ca-GrQc & 0.8 & 26 & 1.8934 & 61  &   0.5268  & 6 & 32  &  0.1154 \\
				ego-Facebook & 0.8 & 36 & 0.7868  & 57  &   0.5017 & 9 & 38 & 0.2531  \\
				feather-lastfm-social & 0.8 & 28 & 1.8435  & 65  &   0.9154  & 5 & 20   & 1.1955 \\
				p2p-Gnutella04 & 0.8 & 15 & 4.6773 & 64  &   2.2723  &  7  & 13 & 3.5261 \\
				airfoil\_2d & 0.8 & 117 & 72.7376 & 148 & 5.0942 & 6 & 7 & 5.9355 \\
				Pres\_Poisson & 0.8 & 64 & 77.7364 & 105 & 3.7668 & 7 & 8 & 7.3866\\
				Oberwolfach/gyro\_k & 0.8 & 42 & 32.7343 & 32 & 4.0394 & 6 & 14 & 11.8209\\
				tsyl201 & 0.8 & 44 & 187.0472 & 54 & 3.2941& 6 & 28 & 4.2295\\
				\hline \hline
			\end{tabular}
		}
	\end{center}
\end{table}


Notice that there is a trade-off in picking $\mu$'s. As proved in Sections~\ref{sec:H_mu} and \ref{sec:MSP}, a small enough $\mu$ keeps $\kappa(\bm{L}_{\widetilde{H}^{\mu}}^{\dagger}\bm{L}_{\widetilde{G}^{\mu}})$ bounded but simultaneously increase $\text{stretch}_{\widetilde{T}^\mu}(\widetilde{H}^{\mu})$, hence the condition number $\kappa(\bm{L}_{\widetilde{T}^\mu}^{\dagger}\bm{L}_{\widetilde{G}^{\mu}})$. Since the structures and properties of the real-life graphs vary much more than the regular grids and the ring graphs, PEGP sometimes has many high-degree nodes, making the matrix inversion time-consuming at each step for larger and denser graphs. Although it converges with fewer steps, the total solving time might not be optimal in practice. MSP is a subgraph of PEGP and its tree-like structure introduces no fill-ins during Gaussian elimination. Therefore each step takes a shorter time than PEGP. As shown in Table~\ref{tab:real-results}, in practice, we choose large $\mu$ (for example, 0.8) to control the stretch and boost the performance of MSP. Especially for larger and denser networks, while PEGP might still be difficult and expensive to solve due to the graph structure, MSP with properly chosen $\mu$ demonstrate a more robust performance than CMG in terms of the CPU time. 

\section{Conclusion and Future Work}\label{Sec:conclusion}
While the spectral sparsifiers preconditioners have theoretical guarantees from the support theory for solving graph Laplacians, they are challenging to implement due to the randomness in finding low-stretch spanning trees/sparsifiers. On the other hand, multilevel methods are deterministic and can achieve nearly linear optimal complexity, but they require PDE-based assumptions and lack theoretical justifications for general graphs. In this paper, we proposed a new method to construct preconditioners that combines the advantages of both. 

We adopt the idea of MG and expand the original graph $G$ to a multilevel structured graph $\widetilde{G}^{\mu}$ with a small diameter. However, we introduce negative edges in $\widetilde{G}^{\mu}$, which make it challenging to apply existing algorithms to find spanning trees/sparsifiers. In Section~\ref{sec:H_mu}, based on the support theory for spanning subgraphs, we define a PEGP $\widetilde{H}^{\mu}$ of $\widetilde{G}^{\mu}$, such that $\bm{L}_{\widetilde{H}^{\mu}}$ is spectrally equivalent to $\bm{L}_{\widetilde{G}^{\mu}}$. Now that
$\widetilde{H}^{\mu}$ has positive weights, small diameter, and multilevel structure, making it easy to construct spanning trees/sparsifiers on. To demonstrate this, we propose the MSP in Section~\ref{sec:MSP} for solving graph Laplacians robustly in practice.


In Section~\ref{sec:num}, we test PEGP and MSP as preconditioners for solving graph Laplacians on 2D regular grids, random Watts-Strogatz graphs. We observe that the new preconditioners take stabler iteration steps and less CPU time than the CMG preconditioner. The solving time scales linearly with the problem size with the proposed preconditioners. Compared to the CMG preconditioner, the proposed approaches are more robust and effective on real-life graphs from various sources. 

As for future directions, we would like to extend our work in the following ways. The parameter $\mu$ plays an important in both theory and practice. Smaller $\mu$'s in $\widetilde{H}^{\mu}$ guarantee a faster convergence rate at each step, while larger $\mu$'s, on the other hand, reduce the stretch and minimize the total computation time for MSP. Therefore, we plan to investigate how to properly chose $\mu$ in practical applications. We also want to test the proposed preconditioners on a broader range of real-life graphs and summarize the graph properties that lead to better performance using the new preconditioners. Finally, we will compare the proposed methods to other spanning trees/sparsifiers besides CMG.

\section*{Acknowledgments}
The work of X. Hu is partially supported by the National Science Foundation under grant DMS-1812503 and CCF-1934553

\bibliographystyle{siam}
\bibliography{MSP}
\end{document}